\theoremstyle{plain}
\newtheorem{thm}{Theorem}
\newtheorem{prop}{Proposition}[section]
\newtheorem{lem}[prop]{Lemma}
\newtheorem{rmk}[prop]{Remark}
\newcommand {\R} {\mathbb{R}}
\newcommand {\p} {\partial}
\newcommand {\pn} {\partial_{\nu}}
\newcommand {\D} {\Delta}
\DeclareMathOperator {\dist} {dist}
\title{Quantitative Runge Approximation and Inverse Problems}
\author[A. R\"uland]{Angkana R\"uland}
\address{Mathematical Institute, University of Oxford, Andrew Wiles Building, Radcliffe Observatory Quarter, Woodstock Road, Oxford OX2 6GG}
\email{ruland@maths.ox.ac.uk}
\author[M. Salo]{Mikko Salo}
\address{Department of Mathematics and Statistics, University of Jyv\"askyl\"a}
\email{mikko.j.salo@jyu.fi}
\begin{document}
\maketitle

\begin{abstract}
In this short note we provide a quantitative version of the classical Runge approximation property for second order elliptic operators. This relies on quantitative unique continuation results and duality arguments. We show that these estimates are essentially optimal. As a model application we provide a new proof of the result from \cite{F07}, \cite{AK12} on stability for the Calder\'on problem with local data. 
\end{abstract}

\section{Introduction}
\label{sec:intro}

In this note we study \emph{quantitative Runge type approximation properties} for elliptic equations. 
If $D_1,D_2$ are bounded, open Lipschitz sets (e.g.\ balls) with $D_1\Subset D_2$, a variant of the \emph{classical Runge approximation property} for uniformly elliptic equations states that it is possible to approximate (for instance with respect to the strong $L^2$ topology) solutions to an equation in the smaller domain by solutions of the same equation in the larger domain. 

Let us formulate this more precisely: Let $D_2$ be a bounded Lipschitz domain in $\R^n$, $n \geq 2$, and consider the operator
\begin{align}
\label{eq:L}
L := \p_i a^{ij} \p_j  + c,
\end{align}
where $(a^{ij}) \in L^{\infty}(D_2, \R^{n\times n})$ is a symmetric matrix function, $c \in L^{\infty}(D_2)$, and for some $K \geq 1$ 
\begin{equation} \label{l_bounds}
\begin{gathered}
K^{-1} |\xi|^2 \leq a^{ij}(x) \xi_i \xi_j \leq K |\xi|^2 \mbox{ for a.e.\ $x \in D_2$ and for all $\xi \in \R^n$}, \\
 \| c \|_{L^{\infty}(D_2)} \leq K, \\
\mbox{and if $n \geq 3$ then also } \| \nabla a^{ij} \|_{L^{\infty}(D_2)} \leq K.
\end{gathered}
\end{equation}
Here we use the summation convention, and for simplicity we do not consider drift terms. We note that
$L$ is formally self-adjoint: $L^{\ast}:= \p_i a^{ij} \p_j + c$. We also make the standing assumption that $0$ is not a Dirichlet eigenvalue for $L$ in $D_2$, so that the Dirichlet problem for $L$ (and $L^*$) in $D_2$ is well-posed.

Further, let $D_1$ be another bounded Lipschitz domain in $\R^n$ so that $D_1 \Subset  D_2$ and $D_2 \setminus \overline{D}_1$ is connected, and let $\Gamma$ be a nonempty open subset of $\partial D_2$. We will compare solutions to the homogeneous equation associated with \eqref{eq:L} on the domains $D_1$ and $D_2$, where additionally the boundary value of the solution on $D_2$ vanishes outside $\Gamma$. We introduce the following spaces:
\begin{align*}
S_1&:=\{h\in H^1(D_1): L w = 0 \mbox{ in } D_1\},\\
S_2&:=\{h\in H^1(D_2): L w = 0 \mbox{ in } D_2, \ w|_{\partial D_2} \in \widetilde{H}^{1/2}(\Gamma) \}.
\end{align*}
The notation $w|_{\partial D_2} \in \widetilde{H}^{1/2}(\Gamma)$ is defined in Section \ref{sec:qual}, and it implies that $\mathrm{supp}(w|_{\partial D_2}) \subset \overline{\Gamma}$. In this notation, the \emph{classical Runge approximation} due to Lax \cite{L56} and Malgrange \cite{M54} asserts the following density property:

\begin{thm}[\cite{L56}, \cite{M54}]
\label{thm:CR}
Let $L$ be the operator from \eqref{eq:L} and let $D_1,D_2,\Gamma$ and $S_1,S_2$ be as above. Then for any $\epsilon>0$ and any $h \in S_1$ there exists $u\in S_2$ such that
\begin{align*}
\|h-u|_{D_1}\|_{L^2(D_1)} \leq \epsilon.
\end{align*}
\end{thm}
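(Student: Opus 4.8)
The plan is to prove this by a Hahn--Banach duality argument, reducing the density statement to a unique continuation property. Concretely, I would consider the restriction map $T\colon S_2 \to L^2(D_1)$, $u \mapsto u|_{D_1}$, and show that its range is dense. By the Hahn--Banach theorem, it suffices to show that any $f \in L^2(D_1)$ which annihilates the range, i.e.\ $\int_{D_1} f\, u\, dx = 0$ for all $u \in S_2$, must be zero. (Strictly speaking, to match the statement one should observe that $S_1$ is itself contained in the $L^2(D_1)$-closure of the range, so it is enough to kill functionals vanishing on the range; one can equally work with the closure of $T(S_2)$ inside $\overline{S_1}^{L^2}$.)

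The key step is to convert the orthogonality condition into a PDE for a dual solution. Given $f \in L^2(D_1)$ as above, extend it by zero to $D_2$ and let $v \in H^1_0(D_2)$ solve $L^{\ast} v = f$ in $D_2$ (well-posed since $0$ is not a Dirichlet eigenvalue and $L^* = L$ is formally self-adjoint). For any $u \in S_2$, an integration by parts using $Lu = 0$ in $D_2$, $v|_{\partial D_2} = 0$, and the boundary regularity gives
\begin{align*}
0 = \int_{D_1} f u \, dx = \int_{D_2} (L^* v)\, u \, dx = \int_{\partial D_2} u\, \p_\nu^{a} v \, dS,
\end{align*}
where $\p_\nu^a v = a^{ij}\p_j v\, \nu_i$ is the conormal derivative. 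Since the traces $u|_{\partial D_2}$ range over a dense subset of $\widetilde{H}^{1/2}(\Gamma)$ (this is where one uses solvability of the Dirichlet problem in $D_2$ and the definition of $S_2$), this forces $\p_\nu^a v = 0$ on $\Gamma$. But $v$ also solves $L^* v = 0$ in $D_2 \setminus \overline{D}_1$, and $v|_{\partial D_2} = 0$, so $v$ has vanishing Cauchy data on the open set $\Gamma \subset \p(D_2 \setminus \overline D_1)$.

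The main obstacle — and the heart of the argument — is the unique continuation step: since $D_2 \setminus \overline D_1$ is connected and $L^*$ has coefficients regular enough for the weak unique continuation property (Lipschitz, or merely bounded measurable with the $C^{0,1}$ assumption on $a^{ij}$ when $n \ge 3$; in $n=2$ the divergence-form structure suffices), vanishing Cauchy data on the open subset $\Gamma$ of the boundary implies $v \equiv 0$ in $D_2 \setminus \overline D_1$. Hence $v$ has zero Cauchy data along $\partial D_1$ from the outside, and by trace continuity $v|_{\partial D_1} = 0$ and $\p_\nu^a v|_{\partial D_1} = 0$ as seen from inside $D_1$ as well. Then for any $h \in S_1$, integrating by parts over $D_1$,
\begin{align*}
\int_{D_1} f h \, dx = \int_{D_1} (L^* v) h \, dx = \int_{\partial D_1}\left( h\, \p_\nu^a v - v\, \p_\nu^a h\right) dS = 0,
\end{align*}
and since $h \in S_1$ is arbitrary while also $f = L^*v$ with $v \in H^1_0(D_1)$ extended trivially... more directly: testing $\int_{D_1} f h = 0$ against a dense family in $L^2(D_1)$, or simply noting $f = L^* v$ with $v$ vanishing to first order on $\p D_1$ means $v \in H^1_0(D_1)$ and $\int_{D_1} f v = \int_{D_1}(L^*v)v = -\int a^{ij}\p_i v \p_j v + \int c v^2$; combined with the fact that $f$ kills $S_1 \ni$ solutions this is not quite enough, so the clean finish is: $f$ annihilates all of $L^2(D_1)$ because $S_1$ is $L^2$-dense... no — rather, we have shown $f$ annihilates the $L^2(D_1)$-closure of $T(S_2)$, which contains $S_1$; but we want $T(S_2)$ dense in $\overline{S_1}$, so it suffices that every $f$ orthogonal to $T(S_2)$ is also orthogonal to $S_1$, which is exactly $\int_{D_1} f h = 0$ for all $h \in S_1$ — and that is precisely what the last displayed identity gives. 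This completes the proof modulo the routine verification of the integration-by-parts formulas with the stated boundary regularity and the trace-density of $\{u|_{\partial D_2} : u \in S_2\}$ in $\widetilde H^{1/2}(\Gamma)$.
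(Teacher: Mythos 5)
Your proposal is correct and follows essentially the same Hahn--Banach duality route as the paper: extend the orthogonal functional $f$ to a solution of the dual problem $L^*v = f$ with zero Dirichlet data on $\partial D_2$, deduce $\partial_\nu v|_\Gamma = 0$, invoke unique continuation in $D_2\setminus\overline{D}_1$ to get vanishing Cauchy data on $\partial D_1$, and then integrate by parts against $h\in S_1$ to conclude $f\perp S_1$. The brief detour in your last paragraph — where you momentarily try to show $f=0$ outright, which is not deducible and not needed — is a false start that you correctly abandon; once you settle on ``$f\perp T(S_2)$ implies $f\perp S_1$'' as the target, your argument matches the paper's proof.
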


A prototypical example of such an approximation result is given by harmonic functions, which, by analyticity, can always be approximated by harmonic polynomials. Extensions of Theorem \ref{thm:CR} to a much more general class of operators were considered for instance in \cite{B62}, \cite{B62a}.

As the main result of this note, we derive a \emph{quantitative} version of Theorem \ref{thm:CR}: Given an  error threshold $\epsilon > 0$ and a solution $h \in S_1$, we estimate the size of a solution in $S_2$ that approximates $h$ in $D_1$ up to error $\epsilon$. 

\begin{thm}
\label{thm:Runge_quant}
Let $L$ be the operator from \eqref{eq:L} and let $D_1,D_2,\Gamma$ and  $S_1,S_2$ be as above.
There exist a parameter $\mu>0$ and a constant $C>1$ (depending on $D_1, D_2, \Gamma, n$, $K$) such that for each function $h \in S_1$ and each error threshold $\epsilon\in(0,1)$, there exists a function $u \in S_2$ with
\begin{align}
\label{eq:approx}
\|h-u|_{D_1}\|_{L^2(D_1)} \leq \epsilon \|h\|_{H^{1}(D_1)}, \ 
\|u\|_{H^{1/2}(\partial D_2)} \leq C e^{C \epsilon^{-\mu}} \|h\|_{L^2(D_1)}.
\end{align}
\end{thm}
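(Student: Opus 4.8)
The plan is to obtain the quantitative Runge approximation by dualizing the density statement of Theorem~\ref{thm:CR} and quantifying the resulting control via a quantitative unique continuation estimate. Concretely, I would set up a Fredholm/functional-analytic framework: consider the restriction operator $A \colon S_2 \to L^2(D_1)$, $u \mapsto u|_{D_1}$, where $S_2$ is parametrized by its Dirichlet data $g = u|_{\partial D_2} \in \widetilde{H}^{1/2}(\Gamma)$ (well-posedness is guaranteed by the standing eigenvalue assumption). Classical Runge (Theorem~\ref{thm:CR}) says $A$ has dense range in $L^2(D_1)$. The adjoint $A^*$ maps $f \in L^2(D_1)$ to (a boundary trace related to) the solution $v$ of $L^* v = f\chi_{D_1}$ in $D_2$ with $v|_{\partial D_2} = 0$; density of the range of $A$ is equivalent to injectivity of $A^*$, and the quantitative version hinges on a lower bound for $\|A^* f\|$ in terms of $\|f\|$, i.e.\ a \emph{quantitative} form of this injectivity. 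Such a bound should follow from the fact that $v$ solves $L^* v = 0$ in $D_2 \setminus \overline{D}_1$ with vanishing Cauchy data on $\Gamma \subset \partial D_2$, so by three-balls/propagation-of-smallness estimates and the connectedness of $D_2 \setminus \overline{D}_1$, smallness of $v$ near $\Gamma$ forces smallness of $v$ (and its normal derivative) on $\partial D_1$, which in turn bounds $\|f\|_{L^2(D_1)}$ from above via an energy estimate — but only with a logarithmic/exponential modulus of continuity, which is the source of the $e^{C\epsilon^{-\mu}}$ factor.

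The second ingredient is a standard functional-analytic lemma converting this "quantitative denseness + stability modulus" into the desired two-sided estimate: if $A \colon X \to Y$ is a bounded operator between Hilbert spaces with dense range, and if $A^*$ satisfies a stability bound of the form $\|f\|_Y \leq \omega(\|A^* f\|_X)$ for some increasing modulus $\omega$, then for every target $h$ and every $\epsilon > 0$ one can solve $\|Ah^* - h\|_Y \leq \epsilon \|h\|$ with $\|h^*\|_X$ controlled by $\omega$ evaluated at a function of $\epsilon$. One clean way to make this effective is via a singular value / spectral decomposition of the compact operator $A^* A$ (or $A A^*$), truncating the expansion of $h$ at a frequency $N$ chosen so that the tail is $\leq \epsilon\|h\|_{H^1(D_1)}$; the stability modulus then bounds how fast the relevant singular values decay, and the norm of the truncated preimage is $\sum \sigma_k^{-1}(\cdots)$, which inherits the exponential blow-up. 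To get $\|h\|_{H^1(D_1)}$ on the right of the approximation inequality (rather than $\|h\|_{L^2}$) one uses elliptic regularity / interpolation: the tail estimate is naturally phrased in a stronger norm and interpolating between $L^2$ and $H^1$ on $D_1$ for solutions of $Lh=0$ costs only a constant.

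In more detail the key steps, in order, would be: (1) reduce to the Dirichlet-data parametrization of $S_2$ and define $A$, $A^*$ explicitly, identifying $A^* f$ with the Cauchy data on $\partial D_1$ of the solution $v$ of the adjoint problem in $D_2$ with source $f\chi_{D_1}$ and zero data on $\partial D_2$; (2) establish the quantitative unique continuation bound: $\|v\|_{H^1(D_1)} + \|\p_\nu v\|_{\ldots} \gtrsim \Phi(\|(v,\p_\nu v)|_\Gamma\|)$ with $\Phi(t) \sim |\log t|^{-\sigma}$, chaining three-balls inequalities from a neighborhood of $\Gamma$ through $D_2 \setminus \overline{D}_1$ to $\partial D_1$ (this is where the regularity hypotheses on $a^{ij}$ in \eqref{l_bounds} and connectedness are used); (3) combine with the energy identity $\|f\|_{L^2(D_1)}^2 = \int_{D_1} f v = \int_{\partial D_1} (\text{Cauchy data of } v)\cdot(\ldots)$, wait---more precisely pair $f$ against $v$ and integrate by parts, to convert the interior bound into the stability estimate $\|f\|_{L^2(D_1)} \leq \omega(\|A^* f\|)$ with $\omega$ of logarithmic type; (4) feed this into the abstract duality/truncation lemma to produce $u \in S_2$ with the claimed bounds, inverting the logarithmic modulus to produce the exponential constant $Ce^{C\epsilon^{-\mu}}$, and upgrading the target norm from $L^2$ to $H^1$ via interior elliptic estimates for $h$. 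The main obstacle is step (2)–(3): carefully propagating smallness with explicit (polynomial-in-$\log$) dependence across the connected region $D_2 \setminus \overline{D}_1$ up to the Lipschitz boundary $\partial D_1$, handling the boundary regularity and the fact that one needs control of both $v$ and its conormal derivative; everything else is either classical elliptic theory or soft functional analysis.
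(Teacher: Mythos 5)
Your overall architecture matches the paper's: set up the Poisson operator $A\colon\widetilde{H}^{1/2}(\Gamma)\to X\subset L^2(D_1)$ and its adjoint $A'$, prove a quantitative unique continuation estimate for the adjoint solution $w$ of $L^*w = h\chi_{D_1}$ in $D_2$ with $w|_{\partial D_2}=0$ (deduced in the paper from \cite{ARV09}), and close via a singular value truncation in the spirit of \cite{R95}, \cite{RS17}. However, your step (3) contains a genuine gap.

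You propose to establish a stability estimate of the form $\|f\|_{L^2(D_1)}\leq\omega(\|A^*f\|)$ for a modulus $\omega$ with $\omega(0^+)=0$, and to feed this into the truncation lemma. This estimate is \emph{false}: $A^*$ is compact and injective, so its singular values $\sigma_j\to 0$, and applying the claimed bound to the singular vectors $\psi_j$ would force $\omega(\sigma_j)\geq 1$ for all $j$, contradicting $\omega(0^+)=0$. This is precisely the ``paradox'' addressed in Remark \ref{rmk:contra}: unique continuation from $\Gamma$ only controls $w$ in $D_2\setminus\overline{D}_1$, not in $D_1$, so you cannot recover $\|f\|_{L^2(D_1)}$ unconditionally. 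The correct route (Proposition \ref{prop:quant_unique}) yields a logarithmic bound for $\|w_\alpha\|_{H^1(D_2\setminus\overline{D}_1)}$ only; the loop is closed by pairing the truncation residual $r_\alpha$ against $h$ itself, writing $\|r_\alpha\|^2_{L^2(D_1)}=(h,L^*w_\alpha)_{L^2(D_1)}$ and integrating by parts using $Lh=0$ in $D_1$, which reduces to boundary terms on $\partial D_1$ bounded by $C\|h\|_{H^1(D_1)}\|w_\alpha\|_{H^1(D_2\setminus\overline{D}_1)}$. This is where the assumption $h\in S_1$ enters essentially, and it is exactly what produces the factor $\epsilon\|h\|_{H^1(D_1)}$ in \eqref{eq:approx}. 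That $H^1$ norm is therefore not an ``upgrade'' obtained afterward by interior elliptic estimates or interpolation, as you suggest at the end; it is the unavoidable conditional price of the stability statement, without which no $\epsilon\|h\|_{L^2(D_1)}$ estimate can hold (as the optimality construction in Section \ref{sec:optimal} confirms). One further small technical point: to bound $\|\nabla w_\alpha\|$ near $\partial D_1$ one needs Meyers' $W^{1,p}$ gain and a Caccioppoli argument (as in Proposition \ref{prop:quant_unique}), since \cite{ARV09} directly controls only $L^2$ norms of $w$; your outline omits this.
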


We remark that up to the precise power $\mu>0$ the exponential bound in $\epsilon$ is optimal, which can be seen by considering spherical harmonics (see Section \ref{sec:optimal}). However, if $h$ is assumed to be a solution in a slightly larger domain, one obtains polynomial bounds instead:

\begin{thm}
\label{thm:Runge_quant2}
Let $L$, $D_1$, $D_2$, $\Gamma$, and $S_2$ be as above. Let also $\tilde{D}$ be a bounded Lipschitz domain with $D_1 \Subset \tilde{D} \Subset D_2$. There exist $C, \mu \geq 1$ (depending on $D_1$, $D_2$, $\tilde{D}$, $\Gamma$, $n$, $K$) such that for each $\tilde{h} \in H^1(\tilde{D})$ with $L\tilde{h} = 0$ in $\tilde{D}$ and for each $\epsilon\in(0,1)$, there exists $u \in S_2$ with
\begin{align}
\label{eq:approx2}
\|\tilde{h}|_{D_1}-u|_{D_1}\|_{L^2(D_1)} \leq \epsilon \|\tilde{h}\|_{H^{1}(\tilde{D})}, \ 
\|u\|_{H^{1/2}(\partial D_2)} \leq C \epsilon^{-\mu} \|\tilde{h}|_{D_1}\|_{L^2(D_1)}.
\end{align}
\end{thm}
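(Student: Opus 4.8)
My plan is to run the Tikhonov regularisation / duality scheme underlying the proof of Theorem~\ref{thm:Runge_quant}, and to exploit the extra hypothesis $L\tilde{h}=0$ on a domain $\tilde{D}\Supset D_1$ in order to upgrade the convergence rate of the regularised residual from logarithmic to H\"older; choosing the regularisation parameter polynomially small in $\epsilon$ then gives \eqref{eq:approx2}. Concretely, for $\delta>0$ I would let $g_\delta\in\widetilde{H}^{1/2}(\Gamma)$ minimise $\mathcal J_\delta(g):=\tfrac12\|u_g-\tilde{h}\|_{L^2(D_1)}^2+\tfrac\delta2\|g\|_{\widetilde{H}^{1/2}(\Gamma)}^2$, where $u_g\in H^1(D_2)$ solves $Lu_g=0$ in $D_2$ with $u_g|_{\partial D_2}=g$, set $u:=u_{g_\delta}\in S_2$ and $r_\delta:=(u-\tilde{h})|_{D_1}$, and introduce the adjoint state $v_\delta\in H^1(D_2)$ solving $L^\ast v_\delta=r_\delta\chi_{D_1}$ in $D_2$, $v_\delta|_{\partial D_2}=0$. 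Exactly as in the proof of Theorem~\ref{thm:Runge_quant}, the Euler--Lagrange equation for $\mathcal J_\delta$ together with Green's formula in $D_2$ and well-posedness of the Dirichlet problem yield $\|\partial_\nu^a v_\delta|_\Gamma\|_{H^{-1/2}(\Gamma)}=\delta\|g_\delta\|_{\widetilde{H}^{1/2}(\Gamma)}$ and $\|v_\delta\|_{H^1(D_2)}\le C\|r_\delta\|_{L^2(D_1)}$, while $\mathcal J_\delta(g_\delta)\le\mathcal J_\delta(0)$ and optimality tested against $g_\delta$ give $\sqrt\delta\,\|g_\delta\|_{\widetilde{H}^{1/2}(\Gamma)}\le\|\tilde{h}|_{D_1}\|_{L^2(D_1)}$ and $\|r_\delta\|_{L^2(D_1)}^2\le|\int_{D_1}r_\delta\,\tilde{h}|$. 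Since $\|u\|_{H^{1/2}(\partial D_2)}=\|g_\delta\|_{\widetilde{H}^{1/2}(\Gamma)}\le C\delta^{-1}\|\tilde{h}|_{D_1}\|_{L^2(D_1)}$, everything reduces to bounding $\|r_\delta\|_{L^2(D_1)}$ by a positive power of $\delta$ times $\|\tilde{h}\|_{H^1(\tilde{D})}$.

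For this I would fix an auxiliary Lipschitz domain $D'$ with $D_1\Subset D'\Subset\tilde{D}$. As $L^\ast v_\delta=r_\delta\chi_{D_1}$ is supported in $\overline{D_1}\subset D'$ and $L\tilde{h}=0$ in $D'$, Green's identity on $D'$ rewrites $\int_{D_1}r_\delta\,\tilde{h}$ as a boundary pairing of the Cauchy data of $v_\delta$ and of $\tilde{h}$ on $\partial D'$. Because $\partial D'\subset D_2\setminus\overline{D_1}$ and $v_\delta$ solves $L^\ast v_\delta=0$ there, interior (Caccioppoli-type) estimates bound the Cauchy data of $v_\delta$ on $\partial D'$ by $C\|v_\delta\|_{L^2(W)}$ for a fixed open set $W$ with $\partial D'\subset W\Subset D_2\setminus\overline{D_1}$; and since $\tilde{h}$ solves $L\tilde{h}=0$ in $\tilde{D}\Supset D'$, its Cauchy data on $\partial D'$ is bounded by $C\|\tilde{h}\|_{H^1(\tilde{D})}$. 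This gives $\|r_\delta\|_{L^2(D_1)}^2\le C\,\|v_\delta\|_{L^2(W)}\,\|\tilde{h}\|_{H^1(\tilde{D})}$.

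The decisive point is that $W$ lies compactly inside the connected set $\Omega:=D_2\setminus\overline{D_1}$ on which $v_\delta$ solves the homogeneous equation, with Cauchy data $(0,\partial_\nu^a v_\delta|_\Gamma)$ on $\Gamma\subset\partial\Omega$ of size $\delta\|g_\delta\|_{\widetilde{H}^{1/2}(\Gamma)}$. A standard quantitative unique continuation / propagation of smallness estimate with \emph{H\"older} modulus --- available precisely because $W\Subset\Omega$, so one never has to propagate up to the inner boundary $\partial D_1$ --- then yields $\theta=\theta(D_1,D_2,\tilde{D},\Gamma,n,K)\in(0,1)$ with $\|v_\delta\|_{L^2(W)}\le C(\delta\|g_\delta\|_{\widetilde{H}^{1/2}(\Gamma)})^{\theta}\|v_\delta\|_{H^1(\Omega)}^{1-\theta}$. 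Combining with $\delta\|g_\delta\|_{\widetilde{H}^{1/2}(\Gamma)}=\sqrt\delta\cdot\sqrt\delta\|g_\delta\|_{\widetilde{H}^{1/2}(\Gamma)}\le\sqrt\delta\,\|\tilde{h}|_{D_1}\|_{L^2(D_1)}$ and $\|v_\delta\|_{H^1(\Omega)}\le\|v_\delta\|_{H^1(D_2)}\le C\|r_\delta\|_{L^2(D_1)}\le C\|\tilde{h}\|_{H^1(\tilde{D})}$ gives $\|v_\delta\|_{L^2(W)}\le C\delta^{\theta/2}\|\tilde{h}\|_{H^1(\tilde{D})}$, hence $\|r_\delta\|_{L^2(D_1)}\le C\delta^{\theta/4}\|\tilde{h}\|_{H^1(\tilde{D})}$. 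Given $\epsilon\in(0,1)$, choosing $\delta:=(\epsilon/C)^{4/\theta}$ (so $\delta\in(0,1)$ after enlarging $C\ge1$) makes the residual at most $\epsilon\|\tilde{h}\|_{H^1(\tilde{D})}$, while $\|u\|_{H^{1/2}(\partial D_2)}\le C\delta^{-1}\|\tilde{h}|_{D_1}\|_{L^2(D_1)}\le C\epsilon^{-\mu}\|\tilde{h}|_{D_1}\|_{L^2(D_1)}$ with $\mu:=4/\theta$, which is \eqref{eq:approx2}.

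The main obstacle is the quantitative unique continuation step: one must propagate the smallness of $v_\delta$ carried by its Cauchy data on $\Gamma$ into a region $W$ staying at positive distance from all of $\partial\Omega=\partial D_2\cup\partial D_1$. The interpolation domain $\tilde{D}$ (and the auxiliary $D'$) is exactly what makes this possible while keeping the modulus H\"older; for a general $h\in S_1$, which need not solve the equation in a neighbourhood of $\overline{D_1}$, one is forced to reach the inner boundary $\partial D_1$, the modulus degenerates to a logarithm, and one recovers only the exponential bound of Theorem~\ref{thm:Runge_quant}. Some care is also needed to ensure the unique continuation and interior regularity statements hold under the stated coefficient regularity --- bounded measurable for $n=2$, Lipschitz for $n\ge3$ --- handled as in the corresponding references.
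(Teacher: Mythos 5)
Your proposal is correct and proves the theorem, but by a genuinely different regularisation scheme than the paper's. The paper reuses the truncated-SVD operator $R_{\alpha}$ from the proof of Theorem~\ref{thm:Runge_quant} (so it relies on the existence of the singular system from Lemma~\ref{lem:singular_val_decom}, which in turn needs compactness, injectivity and dense range of $A$), sets $r_\alpha = h - A(R_\alpha h)$, and gets $\|R_\alpha h\|\le \alpha^{-1}\|h\|_{L^2(D_1)}$ and $\|A^*r_\alpha\|\le\alpha\|r_\alpha\|$ directly from orthogonality; you instead minimise $\mathcal J_\delta$ and read off the parallel facts $\sqrt\delta\,\|g_\delta\|\le\|\tilde h|_{D_1}\|_{L^2(D_1)}$ and $\|A^*r_\delta\|=\delta\|g_\delta\|$ from $\mathcal J_\delta(g_\delta)\le\mathcal J_\delta(0)$ and the Euler--Lagrange equation, which is more self-contained (existence of the minimiser follows from convexity and coercivity, with no need for the spectral theorem). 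The remaining structure is the same in both proofs and both hinge on the identical key idea: because $\tilde h$ solves $L\tilde h=0$ on the strictly larger $\tilde D$, the duality pairing $\int_{D_1} r\,\tilde h$ can be pushed via Green's identity to a boundary integral that never touches $\partial D_1$, so the H\"older-type propagation-of-smallness estimate (the paper's \eqref{eq:unique_1b}, from \cite[Theorem 1.7]{ARV09}) applies and yields a polynomial rate. Your choice to do Green on $D'\Subset\tilde D$ and take $W$ a compact neighbourhood of $\partial D'$ is a cosmetic variant of the paper's choice to do Green on $\tilde D$ itself and take $G=U\setminus\overline{\tilde D}$ for $U$ a slight enlargement; both produce a subdomain compactly contained in $D_2\setminus\overline{D_1}$, which is what the H\"older estimate requires. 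Two small remarks: your energy bound $\sqrt\delta\,\|g_\delta\|\le\|\tilde h|_{D_1}\|_{L^2(D_1)}$ in fact gives $\|u\|_{H^{1/2}(\partial D_2)}\le\delta^{-1/2}\|\tilde h|_{D_1}\|_{L^2(D_1)}$, so you could take $\mu=2/\theta$ rather than $4/\theta$ (irrelevant for the statement); and the coefficient-regularity caveat you flag at the end is indeed exactly what the hypothesis \eqref{l_bounds} ($\nabla a^{ij}\in L^\infty$ when $n\ge3$) is there for, so the unique continuation input from \cite{ARV09} is available.
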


As in \cite{RS17}, which studied quantitative approximation properties for nonlocal equations, the argument for Theorems \ref{thm:Runge_quant} and \ref{thm:Runge_quant2} relies on a quantitative unique continuation result for a ``dual equation", see Proposition \ref{prop:approx_qwuc}, combined with a functional analysis argument which we borrow from control theory \cite{R95}.

This work is partly motivated by applications to inverse problems, where qualitative Runge type approximation results have successfully been applied in various contexts. Focusing particularly on Calder\'on type problems, we mention \cite{KV85, I88} related to determination of piecewise analytic or discontinuous conductivities, the probe method \cite{Ikehata1998, Ikehata2013} and oscillating-decaying solutions \cite{NUW05} for inclusion detection, local data results when the conductivity is known near the boundary \cite{AU04}, and monotonicity based methods \cite{G08} involving localized potentials that are closely related to Runge approximation.

We will use Theorem \ref{thm:Runge_quant2} to provide a new proof of the result from \cite{F07}, \cite{AK12} on the stability of the result in \cite{AU04} for the Calder\'on problem with local data. While this stability result itself is not new, we view the problem as a model setting that demonstrates the strength of quantitative Runge approximation in connection with inverse problems. We hope that the method might be useful in other settings as well.

\subsection*{Organization of the article}
The remainder of the note is organized as follows: Section \ref{sec:qual} recalls the argument for qualitative Runge approximation (Theorem \ref{thm:CR}). In Section \ref{sec:quant} we discuss a quantitative unique continuation principle, which is proved as a consequence of results in \cite{ARV09}. Section \ref{sec:quantrunge} proves the quantitative approximation results (Theorems \ref{thm:Runge_quant} and \ref{thm:Runge_quant2}) by reducing them to quantitative unique continuation via a functional analysis argument. In Section \ref{sec:optimal} we consider a spherically symmetric set-up proving the optimality of the bounds in \eqref{eq:approx}. Finally, in Section \ref{sec:stability}, as a model application of the quantitative Runge approximation, we discuss stability for the Calder\'on problem with local data.

\subsection*{Acknowledgements}
A.R.\ gratefully acknowledges a Junior Research Fellowship at Christ Church. M.S.\ was supported by the Academy of Finland (Finnish Centre of Excellence in Inverse Problems Research, grant number 284715) and an ERC Starting Grant (grant number 307023).

\section{Qualitative Runge Approximation}
\label{sec:qual}

In this section, we recall for completeness the proof of Theorem \ref{thm:CR}. For the proof, we also introduce some notation and facts that will be useful later. We begin with a standard lemma concerning weak solutions in Lipschitz domains. Here and below, we write $(\,\cdot\,,\,\cdot\,)_{L^2}$ both for the $L^2$ inner product and for the distributional pairing between a distribution and a test function.

\begin{lem} \label{lemma_solvability_basic}
Let $\Omega \subset \R^n$, $n \geq 2$, be a bounded open set with Lipschitz boundary. Let $L$ be of the form \eqref{eq:L} where $(a^{ij})$ is symmetric, and for some $M \geq 1$ one has $M^{-1} |\xi|^2 \leq a^{ij} \xi_i \xi_j \leq M |\xi|^2 $ a.e.\ on $\Omega$ and $\| c \|_{L^{\infty}(\Omega)} \leq M$. Assume that $0$ is not a Dirichlet eigenvalue of $L$ in $\Omega$.

Then for any $F \in L^2(\Omega)$ and $g \in H^{1/2}(\partial \Omega)$, the problem 
\[
Lu = F \mbox{ in } \Omega, \quad u = g \mbox{ on } \partial \Omega
\]
has a unique solution $u \in H^1(\Omega)$ satisfying 
\[
\| u \|_{H^1(\Omega)} \leq C ( \| F \|_{L^2(\Omega)} + \| g \|_{H^{1/2}(\partial \Omega}).
\]
The conormal derivative $\pn u \in H^{-1/2}(\partial \Omega)$ is defined in the weak sense via 
\[
(\pn u, \tilde{g})_{L^2(\partial \Omega)} := (a^{ij} \partial_i u, \partial_j E \tilde{g})_{L^2(\Omega)} + (F - cu, E \tilde{g})_{L^2(\Omega)}
\]
where $\tilde{g} \in H^{1/2}(\partial \Omega)$, and $E: H^{1/2}(\partial \Omega) \to H^1(\Omega)$ is any bounded extension operator. (Formally $\pn u := \nu_i a^{ij}\p_j u$, where $\nu:\partial \Omega \rightarrow \R^n$ is the unit outer normal to $\partial \Omega$). If $\Omega' \Subset \Omega$ is another Lipschitz domain, $\partial_{\nu} u|_{\partial \Omega'}$ is well defined and may be computed from $\Omega'$ or $\Omega \setminus \overline{\Omega}'$. Moreover,  
\begin{equation} \label{green_formula_weak}
(Lu, w)_{L^2(\Omega)} - (u, L^* w)_{L^2(\Omega)} = (\pn u, w)_{L^2(\partial \Omega)} - (u, \pn w)_{L^2(\partial \Omega)}
\end{equation}
whenever $w \in H^1(\Omega)$ satisfies $L^* w \in L^2(\Omega)$. Additionally, there is $p > 2$ such that if $g = 0$, then $\|\nabla u\|_{L^p(\Omega)} \leq C \|F\|_{L^2(\Omega)}$. The constants $C$ and $p$ only depend on $\Omega$, $n$, and $M$.
\end{lem}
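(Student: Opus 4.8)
The plan is to establish the five assertions of the lemma in turn --- solvability of the Dirichlet problem, the definition and boundedness of the conormal derivative, its computability from either side of an interior interface, the weak Green identity \eqref{green_formula_weak}, and the higher integrability of $\nabla u$ --- with all but the last reducing to elementary manipulations of the weak formulation. For solvability, I would first remove the boundary data: with a bounded extension operator $E:H^{1/2}(\partial\Omega)\to H^1(\Omega)$, the function $u_0:=u-Eg\in H^1_0(\Omega)$ must solve $Lu_0=F-L(Eg)$ in $H^{-1}(\Omega)$, where $\|F-L(Eg)\|_{H^{-1}(\Omega)}\lesssim\|F\|_{L^2(\Omega)}+\|g\|_{H^{1/2}(\partial\Omega)}$. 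The bilinear form $B[v,w]:=(a^{ij}\partial_i v,\partial_j w)_{L^2(\Omega)}-(cv,w)_{L^2(\Omega)}$ on $H^1_0(\Omega)$ is bounded and, by ellipticity and $\|c\|_{L^\infty(\Omega)}\le M$, satisfies the G\aa rding inequality $B[v,v]\ge M^{-1}\|\nabla v\|_{L^2(\Omega)}^2-M\|v\|_{L^2(\Omega)}^2$; hence, by the compact embedding $H^1_0(\Omega)\hookrightarrow L^2(\Omega)$ and the Fredholm alternative, solvability for every right-hand side is equivalent to uniqueness, which holds precisely because $0$ is not a Dirichlet eigenvalue. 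The a priori estimate then follows from the open mapping theorem (equivalently, from a compactness-contradiction argument).

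For the conormal derivative, given $u\in H^1(\Omega)$ with $Lu=F\in L^2(\Omega)$, the map $\tilde g\mapsto(a^{ij}\partial_i u,\partial_j E\tilde g)_{L^2(\Omega)}+(F-cu,E\tilde g)_{L^2(\Omega)}$ is linear and bounded by $C(\|\nabla u\|_{L^2(\Omega)}+\|F\|_{L^2(\Omega)}+\|u\|_{L^2(\Omega)})\|\tilde g\|_{H^{1/2}(\partial\Omega)}$, hence defines an element of $H^{-1/2}(\partial\Omega)$. It is independent of the choice of $E$ because the difference of two extensions lies in $H^1_0(\Omega)$, on which the functional vanishes: the weak form of $Lu=F$ is exactly $(a^{ij}\partial_i u,\partial_j\phi)_{L^2(\Omega)}=(cu-F,\phi)_{L^2(\Omega)}$ for all $\phi\in H^1_0(\Omega)$, by density of $C_c^\infty(\Omega)$. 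If $\Omega'\Subset\Omega$ is Lipschitz, then $Lu=F$ holds both in $\Omega'$ and in $\Omega\setminus\overline{\Omega}'$, so the same construction defines $\partial_\nu u|_{\partial\Omega'}$ from either subdomain; choosing one $\phi\in H^1_0(\Omega)$ with $\phi|_{\partial\Omega'}=\tilde g$, the two one-sided defining expressions combine (with the correct relative sign) into the global weak form $B_\Omega[u,\phi]+(F,\phi)_{L^2(\Omega)}$ tested against $\phi\in H^1_0(\Omega)$, which vanishes since $u$ solves $Lu=F$ in $\Omega$; hence the two computations agree.

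For Green's formula, let $w\in H^1(\Omega)$ with $L^*w\in L^2(\Omega)$; since $(a^{ij})$ is symmetric, the conormal derivative of $w$ associated with $L^*$ is again $\nu_i a^{ij}\partial_j w$, defined as above with $F$ replaced by $L^*w$. Using $w$ itself as the extension of its trace gives $(\partial_\nu u,w)_{L^2(\partial\Omega)}=(a^{ij}\partial_i u,\partial_j w)_{L^2(\Omega)}+(F-cu,w)_{L^2(\Omega)}$, and symmetrically $(u,\partial_\nu w)_{L^2(\partial\Omega)}=(a^{ij}\partial_i w,\partial_j u)_{L^2(\Omega)}+(L^*w-cw,u)_{L^2(\Omega)}$. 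Subtracting, the principal-part terms cancel by the symmetry of $(a^{ij})$ and the terms $(cu,w)$ and $(cw,u)$ cancel, leaving $(\partial_\nu u,w)_{L^2(\partial\Omega)}-(u,\partial_\nu w)_{L^2(\partial\Omega)}=(F,w)_{L^2(\Omega)}-(L^*w,u)_{L^2(\Omega)}=(Lu,w)_{L^2(\Omega)}-(u,L^*w)_{L^2(\Omega)}$, which is \eqref{green_formula_weak}.

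The last assertion is the only substantive point, and I expect it to be the main obstacle: it is a Meyers-type higher-integrability estimate. When $g=0$ we have $u\in H^1_0(\Omega)$ and $\partial_i(a^{ij}\partial_j u)=F-cu$, with $\|F-cu\|_{L^2(\Omega)}\lesssim\|F\|_{L^2(\Omega)}$ by the solvability bound. On a bounded domain one has the Sobolev embedding $L^2(\Omega)\hookrightarrow W^{-1,p_0}(\Omega)$ for some $p_0=p_0(n,\Omega)>2$ (dually, $W^{1,p_0'}_0(\Omega)\hookrightarrow L^2(\Omega)$), so $F-cu\in W^{-1,p_0}(\Omega)$ with norm $\lesssim\|F\|_{L^2(\Omega)}$. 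By Meyers' theorem, the solution operator of the divergence-form operator $\partial_i(a^{ij}\partial_j\,\cdot\,)$ --- the bounded zeroth-order term $c$ being a harmless perturbation --- is bounded $W^{-1,q}(\Omega)\to W^{1,q}_0(\Omega)$ for $q$ in a neighbourhood $[2,\bar p]$ of $2$, with $\bar p=\bar p(n,M,\Omega)>2$; taking $p:=\min(p_0,\bar p)>2$ then gives $\|\nabla u\|_{L^p(\Omega)}\lesssim\|F\|_{L^2(\Omega)}$, with $p$ depending only on $\Omega,n,M$. The remaining assertions are bookkeeping with the weak formulation; the Meyers estimate on Lipschitz domains is the one genuinely nontrivial ingredient.
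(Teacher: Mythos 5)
Your proposal is correct and takes essentially the same approach as the paper, which simply cites McLean for the well-posedness, the conormal derivative, and the weak Green identity, and cites Meyers (with the Jerison--Kenig extension to Lipschitz domains) for the higher integrability of $\nabla u$. You have merely written out the standard Lax--Milgram/Fredholm, extension-operator, and Meyers arguments that the paper delegates to those references.
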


\begin{proof}
The last statement follows from \cite[Theorem 1]{Meyers1963} (Lipschitz domains are admissible by \cite[Theorem 0.5]{JerisonKenig1995}). The other statements are standard (see for instance \cite{McLean}).
\end{proof}

We will next prove Theorem \ref{thm:CR}. Define the space 
\[
\widetilde{H}^{1/2}(\Gamma) = \text{closure of $\{ g \in H^{1/2}(\partial D_2) \,;\, \mathrm{supp}(g) \subset \Gamma \}$ in $H^{1/2}(\partial D_2)$}.
\]
Then $\widetilde{H}^{1/2}(\Gamma)$ is a closed subspace of $H^{1/2}(\partial D_2)$, and its dual space may be identified with $H^{-1/2}(\Gamma)$ (similarly as in \cite[Theorem 3.3]{CHM17}). Let $X$ be the closure of $S_1$ in $L^2(D_1)$, and consider the mapping
\begin{equation}
\label{eq:A}
\begin{split}
A: \widetilde{H}^{1/2}(\Gamma) &\rightarrow X \subset L^2(D_1),\\
g & \mapsto u|_{D_1},
\end{split}
\end{equation}
where $u \in S_2$ has boundary data $g$. The operator $A$ may be written as $Ag = Pg|_{D_1}$ where $P$ is the Poisson operator for $L$ in $D_2$. Its Banach space adjoint is given by the operator
\begin{equation}
\label{eq:A*}
\begin{split}
A': X \subset L^2(D_1) &\rightarrow  H^{-1/2}(\Gamma),\\
h & \mapsto \pn w|_{\Gamma},
\end{split}
\end{equation}
where $w$ and $h$ are related through 
\begin{align}
\label{eq:dualzero}
L^{\ast} w = \left\{
\begin{array}{ll}
h \mbox{ in } D_1,\\
0 \mbox{ in } D_2 \setminus \overline{D}_1,
\end{array}
\right. \ w = 0 \mbox{ on } \partial D_2.
\end{align}
In fact, the above notions are well defined by Lemma \ref{lemma_solvability_basic}, and \eqref{eq:A*} follows from the computation 
\begin{equation} \label{a_adjoint_computation}
(Ag, h)_{L^2(D_1)} = (u, L^* w)_{L^2(D_2)} = (g, \partial_{\nu} w)_{L^2(\partial D_2)}
\end{equation}
for $g \in \widetilde{H}^{1/2}(\Gamma)$ and $h \in L^2(D_1)$, where we have used \eqref{green_formula_weak}.

\begin{proof}[Proof of Theorem \ref{thm:CR}]
It is enough to show that the range of $A$ is a dense subspace in $X$. By the Hahn-Banach theorem, this will follow if we can show that any $h \in L^2(D_1)$ that satisfies 
\begin{equation} \label{a_orthogonality_condition}
(Ag, h)_{L^2(D_1)} = 0 \quad \text{for all $g \in \widetilde{H}^{1/2}(\Gamma)$}
\end{equation}
must also satisfy $(v, h)_{L^2(D_1)} = 0$ for all $v \in S_1$. But if $w$ is the solution of \eqref{eq:dualzero}, then \eqref{a_adjoint_computation} and  \eqref{a_orthogonality_condition} imply that $\partial_{\nu} w|_{\Gamma} = 0$. Thus $w$ solves 
\[
L^* w = 0 \mbox{ in } D_2 \setminus \overline{D}_1, \ \ w|_{\partial D_2} = \partial_{\nu} w|_{\Gamma} = 0.
\]
Since $D_2 \setminus \overline{D}_1$ was assumed connected, the unique continuation principle (for instance the version in \cite[Theorem 1.9]{ARV09}) implies that $w = 0$ in $D_2 \setminus \overline{D}_1$. It follows that 
\[
h = L^* (w|_{D_1}) \mbox{ where } w|_{\partial D_1} = \partial_{\nu} w|_{\partial D_1} = 0.
\]
Now if $v \in S_1$, we use \eqref{green_formula_weak} to conclude that 
\[
(v, h)_{L^2(D_1)} = (v, L^*(w|_{D_1}))_{L^2(D_1)} = (Lv, w|_{D_1})_{L^2(D_1)} = 0.
\]
Thus $h$ is $L^2$-orthogonal to $S_1$, which proves the theorem.
\end{proof}

\section{Quantitative Unique Continuation}
\label{sec:quant}

In the sequel, we argue that the validity of an approximation result as in Theorem \ref{thm:Runge_quant} is closely related to a quantitative unique continuation result. To this end, we first show that Theorem \ref{thm:Runge_quant} entails a quantitative unique continuation result. 

\begin{prop}
\label{prop:approx_qwuc}
Let $L$ be the operator from \eqref{eq:L} and let $D_1,D_2, \Gamma, S_1,S_2$ be as in Section \ref{sec:intro}.
Let $h \in S_1$ and define $w$ as a solution of the equation
\begin{align}
\label{eq:dual_1}
L^{\ast} w & = \left\{
\begin{array}{ll}
h \mbox{ in } D_1,\\
0 \mbox{ in } D_2 \setminus \overline{D}_1,
\end{array}
\right. \ 
w = 0 \mbox{ on } \partial D_2.
\end{align}
Assume that the result of Theorem \ref{thm:Runge_quant} holds. 
Then,
\begin{align}
\label{eq:unique_cont}
\|h\|_{L^2(D_1)} \leq C e^{C \epsilon^{-\mu}} \|\pn w\|_{H^{-1/2}(\Gamma)},
\end{align}
whenever $\epsilon > 0$ satisfies $\epsilon \|h\|_{H^{1}(D_1)} \leq \frac{1}{2} \|h\|_{L^2(D_1)}$.
\end{prop}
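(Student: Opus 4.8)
The plan is to run a duality argument: since Theorem~\ref{thm:Runge_quant} is assumed, it produces an approximating solution $u \in S_2$, and combining the smallness of the approximation error with the adjoint identity \eqref{a_adjoint_computation} will recover $\|h\|_{L^2(D_1)}$ in terms of $\|\pn w\|_{H^{-1/2}(\Gamma)}$. We may assume $h \neq 0$, since otherwise \eqref{eq:unique_cont} is trivial. So fix $\epsilon \in (0,1)$ with $\epsilon \|h\|_{H^1(D_1)} \leq \frac{1}{2}\|h\|_{L^2(D_1)}$ and apply Theorem~\ref{thm:Runge_quant} to get $u \in S_2$ satisfying \eqref{eq:approx}. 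Write $g := u|_{\partial D_2} \in \widetilde{H}^{1/2}(\Gamma)$, so that in the notation of Section~\ref{sec:qual} one has $u|_{D_1} = Ag$.

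Next I would decompose
\[
\|h\|_{L^2(D_1)}^2 = (h,\, h - u|_{D_1})_{L^2(D_1)} + (h,\, u|_{D_1})_{L^2(D_1)}.
\]
The first term is at most $\|h\|_{L^2(D_1)} \|h - u|_{D_1}\|_{L^2(D_1)} \leq \epsilon \|h\|_{L^2(D_1)} \|h\|_{H^1(D_1)} \leq \frac12 \|h\|_{L^2(D_1)}^2$ by the constraint on $\epsilon$. For the second term, note that $w$ from \eqref{eq:dual_1} is exactly the solution appearing in \eqref{eq:dualzero}, so the adjoint computation \eqref{a_adjoint_computation} gives $(h, u|_{D_1})_{L^2(D_1)} = (Ag, h)_{L^2(D_1)} = (g, \pn w)_{L^2(\partial D_2)}$. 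Since $\mathrm{supp}(g) \subset \overline{\Gamma}$, this boundary pairing is the $\widetilde{H}^{1/2}(\Gamma)$--$H^{-1/2}(\Gamma)$ duality pairing of $g$ with the restriction $\pn w|_\Gamma$, whence
\[
|(h, u|_{D_1})_{L^2(D_1)}| \leq \|g\|_{\widetilde{H}^{1/2}(\Gamma)}\, \|\pn w\|_{H^{-1/2}(\Gamma)} \leq \|u\|_{H^{1/2}(\partial D_2)}\, \|\pn w\|_{H^{-1/2}(\Gamma)} \leq C e^{C\epsilon^{-\mu}} \|h\|_{L^2(D_1)}\, \|\pn w\|_{H^{-1/2}(\Gamma)},
\]
using the second bound in \eqref{eq:approx}.

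Combining the two estimates yields $\|h\|_{L^2(D_1)}^2 \leq \frac12 \|h\|_{L^2(D_1)}^2 + C e^{C\epsilon^{-\mu}} \|h\|_{L^2(D_1)} \|\pn w\|_{H^{-1/2}(\Gamma)}$; absorbing the first term into the left-hand side and dividing by $\|h\|_{L^2(D_1)} > 0$ gives \eqref{eq:unique_cont} with $C$ replaced by $2C$. I do not expect any substantive analytic obstacle here — all the real work sits inside Theorem~\ref{thm:Runge_quant}. The only points requiring care are bookkeeping: that the boundary pairing $(g, \pn w)_{L^2(\partial D_2)}$ is legitimately interpreted via the $\widetilde{H}^{1/2}(\Gamma)$--$H^{-1/2}(\Gamma)$ duality (already recorded in Section~\ref{sec:qual}), and that $\pn w \in H^{-1/2}(\partial D_2)$ is well defined and computable from either side of $\partial D_1$ (Lemma~\ref{lemma_solvability_basic}).
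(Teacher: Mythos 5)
Your proof is correct and follows essentially the same route as the paper's: both decompose $\|h\|_{L^2(D_1)}^2$ into the pairing with the approximation error and the pairing with $u|_{D_1}$, convert the latter to the boundary term $(g,\pn w)_{L^2(\partial D_2)}$ via the Green/adjoint identity, and then absorb the error term using the constraint on $\epsilon$. The only cosmetic difference is that you absorb the half of $\|h\|_{L^2(D_1)}^2$ before dividing by $\|h\|_{L^2(D_1)}$, while the paper divides first and then invokes the constraint — an identical calculation in substance.
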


\begin{proof}
The argument for \eqref{eq:unique_cont} follows from Green's theorem \eqref{green_formula_weak}, which asserts that
\begin{align}
\label{eq:Green}
(u, h)_{L^2(D_1)} = (g, \p_{\nu} w)_{L^2(\partial D_2)},
\end{align}
where the functions $h$, $w$ are assumed to be related as in the formulation of the proposition, while the functions $u, g$ are connected through
\begin{align*}
L u &= 0 \mbox{ in } D_2,\ u = g \mbox{ on } \partial D_2.
\end{align*}
In particular, this implies that
\begin{align*}
\|h\|_{L^2(D_1)}^2 = (h-u,h)_{L^2(D_1)} + (g, \pn w)_{L^2(\partial D_2)}.
\end{align*}
Assuming that in the sense of \eqref{eq:approx} $u$ is an approximation to $h \in S_1$ for a given choice of $\epsilon>0$, where $g = u|_{\partial D_2} \in \widetilde{H}^{1/2}(\Gamma)$, we infer that
\begin{align*}
\|h\|_{L^2(D_1)}^2 &\leq \|g\|_{H^{1/2}(\partial D_2)} \|\pn w\|_{H^{-1/2}(\Gamma)}
 + \|u-h\|_{L^2(D_1)}\|h\|_{L^2(D_1)}\\
& \leq C e^{C \epsilon^{-\mu}} \|h\|_{L^2(D_1)} \|\pn w\|_{H^{-1/2}(\Gamma)} + \epsilon \|h\|_{H^{1}(D_1)}\|h\|_{L^2(D_1)}.
\end{align*}
Dividing by $\|h\|_{L^2(D_1)}$ and
choosing $\epsilon>0$ such that $\epsilon \|h\|_{H^{1}(D_1)} \leq \frac{1}{2} \|h\|_{L^2(D_1)}$ implies \eqref{eq:unique_cont}.
\end{proof}

\begin{rmk}
\label{rmk:contra}
Let us comment on this quantitative unique continuation result:
At first sight a result like this seems impossible, as for a general solution to \eqref{eq:dual_1} unique continuation from the boundary  clearly only implies that if $\pn w|_{\Gamma}$ vanishes, then $w=0$ in $D_2 \setminus \overline{D}_1$. In particular, in general it does not imply that $w=0$ in $D_1$.

However, in the present situation we have two strong additional assumptions: Firstly, $h$ itself is an element of $S_1$. Secondly, we assume that \eqref{eq:approx} holds. This corresponds to the assumption that $S_2|_{D_1}$ is dense in $S_1$. If $\pn w|_{\Gamma} =0$, \eqref{eq:Green} therefore yields
\begin{align}
\label{eq:orth}
(u,h)_{L^2(D_1)} = 0 \mbox{ for all } u \in S_2.
\end{align}
By density of $S_2|_{D_1}$ in $S_1$ this then implies that \eqref{eq:orth} also holds for all functions $u\in S_1$. Setting $u = h$, we therefore indeed obtain $h=0$ in accordance with \eqref{eq:unique_cont}. This resolves the initial ``paradox".
\end{rmk}

Next, we derive a quantitative unique continuation result. As in the qualitative proof of the Runge approximation property, we will exploit this in deducing the quantitative results of Theorem \ref{thm:Runge_quant} and \ref{thm:Runge_quant2}.

\begin{prop}
\label{prop:quant_unique}
Let $L$ be the operator from \eqref{eq:L} and let $D_1,D_2, \Gamma, S_1,S_2$ be as in Section \ref{sec:intro}.
Let $h \in S_1$ and define $w: D_2 \rightarrow \R$ as a solution to
\begin{align*}
L^{\ast} w = \left\{
\begin{array}{ll}
h \mbox{ in } D_1,\\
0 \mbox{ in } D_2 \setminus \overline{D}_1,
\end{array}
\right. \ w = 0 \mbox{ on } \partial D_2.
\end{align*}
Then there exist a parameter $\mu>0$ and a constant $C>1$ (depending on $D_1, D_2, \Gamma, n, K$) such that
\begin{equation}
\label{eq:unique_1a}
\begin{split}
\|w\|_{H^1(D_2 \setminus \overline{D}_1)} 
&\leq C  \frac{ \|h\|_{L^{2}(D_1)}  }{\log\left( C \frac{ \|h\|_{L^{2}(D_1)} }{\|\pn w\|_{H^{-1/2}(\Gamma)} }  \right)^{\mu}}.
\end{split}
\end{equation}
Moreover, if $G$ is a bounded Lipschitz domain with $G \Subset D_2 \setminus \overline{D}_1$, then 
\begin{equation}
\label{eq:unique_1b}
\begin{split}
\|w\|_{H^1(G)} 
&\leq C \|h\|_{L^{2}(D_1)} \left( \frac{\|\pn w\|_{H^{-1/2}(\Gamma)} }{ \|h\|_{L^{2}(D_1)} }  \right)^{\delta},
\end{split}
\end{equation}
where $C \geq 1$ and $\delta \in (0,1)$ depend on $D_1, D_2, G, \Gamma, n, K$.
\end{prop}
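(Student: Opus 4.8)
The plan is to deduce Proposition~\ref{prop:quant_unique} from a standard three-balls / quantitative unique continuation estimate applied to $w$ on the connected set $D_2\setminus\overline{D}_1$, where $L^\ast w=0$. The key point is that $w$ has two pieces of Cauchy data on $\partial D_2$: the Dirichlet datum $w|_{\partial D_2}=0$ and the Neumann datum $\pn w|_{\partial D_2}$, which is small in the sense that $\|\pn w\|_{H^{-1/2}(\Gamma)}$ is the quantity we measure; note that $\pn w$ need not be supported in $\Gamma$, but since $w$ solves $L^\ast w=0$ in $D_2\setminus\overline{D}_1$ with $w|_{\partial D_2}=0$ and the equation has $L^\infty$ (resp.\ Lipschitz if $n\geq 3$) coefficients, one first wants to control $\pn w|_{\partial D_2\setminus\Gamma}$ too. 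Actually, here it is cleaner to work with the \emph{interior} interface: on $\partial D_1$ the function $w$ has full Cauchy data $(w|_{\partial D_1},\pn w|_{\partial D_1})$ inherited from the $D_1$-side problem $L^\ast w=h$, and these are bounded by $C\|h\|_{L^2(D_1)}$ via Lemma~\ref{lemma_solvability_basic} (the $L^p$-gradient bound with $p>2$ gives the needed control of the conormal trace).

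First I would normalize, setting $m:=\|h\|_{L^2(D_1)}$ and $N:=\|\pn w\|_{H^{-1/2}(\Gamma)}$ and assuming WLOG $N\leq m$ (otherwise the estimates are trivial after adjusting $C$). Then I would invoke the quantitative unique continuation result of \cite{ARV09} — the same reference used qualitatively in the proof of Theorem~\ref{thm:CR} — in its quantitative form: for solutions of $L^\ast w=0$ on the connected domain $D_2\setminus\overline{D}_1$, with Cauchy data on $\Gamma\subset\partial D_2$ of size $\leq N$ and a global $H^1$ (or $L^2$) bound of size $\leq \Lambda$, one has interior and boundary estimates of the form $\|w\|_{H^1(G)}\leq C\Lambda^{1-\delta}N^{\delta}$ for $G\Subset D_2\setminus\overline{D}_1$ and of log-type $\|w\|_{H^1(D_2\setminus\overline{D}_1)}\leq C\Lambda\, \big(\log(C\Lambda/N)\big)^{-\mu}$ up to the boundary $\partial D_1$. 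The global bound $\Lambda\leq C\|h\|_{L^2(D_1)}$ again comes from Lemma~\ref{lemma_solvability_basic} applied on $D_2$. Substituting $\Lambda\lesssim m$ yields exactly \eqref{eq:unique_1a} and \eqref{eq:unique_1b}. The propagation of smallness from $\Gamma$ to an interior ball, then chaining balls to reach $G$ and finally degrading to a logarithmic modulus near the inner boundary $\partial D_1$ (where one loses by the usual interpolation against the full-domain bound), is precisely the content of the Alessandrini--Rondi--Rosset--Vessella machinery, so I would cite it rather than reprove it.

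The one genuinely careful step is converting the \emph{boundary} smallness $\|\pn w\|_{H^{-1/2}(\Gamma)}$ into the hypothesis format required by \cite{ARV09}, which is typically phrased with Cauchy data in $H^{1/2}\times H^{-1/2}$ or in $L^2$ norms on an open piece of the boundary. Since $w|_{\partial D_2}=0$ identically and $\pn w|_{\Gamma}$ is small in $H^{-1/2}(\Gamma)$, we are in the homogeneous-Dirichlet / small-Neumann situation; one extends $w$ by $0$ across $\Gamma$ (legitimate because the Dirichlet trace vanishes) or uses the Cauchy-data formulation directly, and the $H^{-1/2}(\Gamma)$ norm is the natural one paired against the $\widetilde H^{1/2}(\Gamma)$ test functions, matching the duality set up in Section~\ref{sec:qual}. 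A secondary technical point: the estimate of \cite{ARV09} naturally produces a bound in some interior $L^2$ or $H^1$ norm on a slightly shrunk domain; to get the full $H^1(D_2\setminus\overline{D}_1)$ norm in \eqref{eq:unique_1a} one applies interior elliptic regularity plus a boundary estimate near $\partial D_2$ (where the Cauchy data is small) and near $\partial D_1$ (where one uses the a~priori $H^1$ bound and interpolates), which only costs a harmless change in the exponent $\mu$.

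I expect the main obstacle to be purely bookkeeping: making sure the constants and the exponents $\mu,\delta$ depend only on the allowed quantities $D_1,D_2,G,\Gamma,n,K$ and not on $w$ or $h$, and tracking how the normalization $N\leq m$ and the reduction to the homogeneous-Dirichlet Cauchy problem interact with the precise hypotheses of the cited unique-continuation theorem. There is no deep new idea needed beyond recognizing that $w$ restricted to $D_2\setminus\overline{D}_1$ is a solution with small Cauchy data on $\Gamma$ and a controlled global size, which is exactly the setting in which quantitative unique continuation applies.
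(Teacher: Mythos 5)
Your proposal is essentially the paper's approach: both hinge on the stability-of-Cauchy-data estimates of \cite[Theorems 1.7 and 1.9]{ARV09} applied to $L^\ast w = 0$ in $\Omega := D_2\setminus\overline{D}_1$ with vanishing Dirichlet trace and small Neumann trace on $\Gamma$, combined with the global a priori bound $\|w\|_{H^1(D_2)}\leq C\|h\|_{L^2(D_1)}$ from Lemma~\ref{lemma_solvability_basic}.

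The one place where your account is noticeably vaguer than what is actually required is the upgrade from the $L^2(\Omega)$ estimate delivered by \cite{ARV09} to the $H^1(\Omega)$ estimate in \eqref{eq:unique_1a}. The paper does this by a clean boundary-layer argument: let $\Omega_r = \{x\in\Omega : \dist(x,\partial\Omega)>r\}$; then Caccioppoli gives $\|\nabla w\|_{L^2(\Omega_r)}\leq Cr^{-1}\|w\|_{L^2(\Omega_{r/2})}$, while on the thin shell $\Omega\setminus\overline{\Omega}_r$ one uses H\"older together with the Meyers $L^p$-gradient bound ($p>2$, $q=p/2$) to get $\|\nabla w\|_{L^2(\Omega\setminus\overline{\Omega}_r)}\leq C\|h\|_{L^2(D_1)}r^{1/(2q')}$, and then optimizing over $r$ yields $\|\nabla w\|_{L^2(\Omega)}\leq CB^{\alpha}\|h\|_{L^2(D_1)}^{1-\alpha}$ with $B$ the $L^2$ bound from \cite{ARV09}. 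Your sketch instead invokes ``interior elliptic regularity plus a boundary estimate near $\partial D_2$ and near $\partial D_1$ ... and interpolates,'' and your earlier mention of the Meyers $L^p$ bound frames it as controlling the conormal trace on $\partial D_1$ rather than as the key ingredient for the shell estimate; this misattributes the role of that tool. Relatedly, the digression about preferring ``the interior interface'' $\partial D_1$ and using the Cauchy data of $w$ there is a red herring: the paper never propagates from $\partial D_1$; it estimates $w$ on $\Omega$ directly from the small Cauchy data on $\Gamma\subset\partial D_2$, and the inner boundary only enters through the global $H^1$ bound and the thin-shell argument above. These are fixable bookkeeping issues rather than conceptual errors, but as written your proof of the $H^1$ upgrade would not go through without being replaced by something like the Caccioppoli/Meyers split.
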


\begin{proof}
We write $\Omega = D_2 \setminus \overline{D}_1$. Since
\begin{align*}
L^{\ast} w = 0 \mbox{ in } \Omega, 
\end{align*}
the quantitative unique continuation result of \eqref{eq:unique_1a} follows for instance from \cite{ARV09}. Indeed, \cite[Theorem 1.9]{ARV09} implies that whenever $E \geq \|w\|_{H^1(\Omega)}$ and $\eta \geq \|\pn w\|_{H^{-1/2}(\Gamma)}$, one has 
\begin{equation} \label{wzero_initial_estimate}
\|w\|_{L^2(\Omega)} \leq (E+\eta) \omega \left( \frac{\eta}{E+\eta} \right),
\end{equation}
where $\omega(t) \leq C(\log \frac{1}{t})^{-\mu}$ for $0 \leq t \leq 1$ and where $C, \mu > 0$ only depend on the quantities in the statement of this proposition.

We wish to obtain an analogous statement for $\nabla w$. To do this, set 
\[
\Omega_r = \{ x \in \Omega : \dist(x, \partial \Omega) > r \}.
\]
Denoting the right hand side of \eqref{wzero_initial_estimate} by $B$, Caccioppoli's inequality yields 
\[
\| \nabla w \|_{L^2(\Omega_r)} \leq C r^{-1} \| w \|_{L^2(\Omega_{r/2})} \leq C r^{-1} B.
\]
On the other hand, for any $q > 1$ sufficiently close to $1$, and with $\frac{1}{q} + \frac{1}{q'} = 1$, we have 
\[
\| \nabla w \|_{L^2(\Omega \setminus \overline{\Omega}_r)} \leq \| \nabla w \|_{L^{2q}(D_2)} | \Omega \setminus \overline{\Omega}_r |^{\frac{1}{2q'}} \leq C \| \nabla w \|_{L^{2q}(D_2)} r^{\frac{1}{2q'}}.
\]
Choosing $q = p/2$ with $p$ as in Lemma \ref{lemma_solvability_basic} gives $\| \nabla w \|_{L^{2q}(D_2)} \leq C \| h \|_{L^2(D_1)}$. Combining the above facts leads to 
\[
\| \nabla w \|_{L^2(\Omega)} \leq C r^{-1} B + C \|h\|_{L^2(D_1)} r^{\frac{1}{2q'}}.
\]
Now choosing $r > 0$ so that both terms on the right are equal, we get 
\begin{equation} \label{nablawzero_estimate}
\| \nabla w \|_{L^2(\Omega)} \leq C B^{\alpha} \|h\|_{L^2(D_1)}^{1-\alpha}
\end{equation}
where $\alpha \in (0,1)$ only depends on $q$. Using the fact that $\|w\|_{H^1(D_2)} \leq C \|h\|_{L^2(D_1)}$, we may take $E = C \|h\|_{L^2(D_1)}$. Choosing also $\eta = \|\pn w\|_{H^{-1/2}(\Gamma)}$, we have $\eta \leq C E$ and thus \eqref{eq:unique_1a} follows by combining \eqref{wzero_initial_estimate} and \eqref{nablawzero_estimate}.

To prove \eqref{eq:unique_1b}, we invoke instead \cite[Theorem 1.7]{ARV09} which states that 
\[
\|w\|_{L^2(G)} \leq C (E+\eta)^{1-\delta} \eta^{\delta}
\]
for some $C \geq 1$ and $\delta \in (0,1)$. Arguing as above, we obtain a similar estimate for $\|\nabla w\|_{L^2(G)}$, which yields \eqref{eq:unique_1b} for some new $\delta \in (0,1)$.
\end{proof}

\section{Quantitative Runge Approximation}
\label{sec:quantrunge}

We next seek to show that the quantitative unique continuation estimate \eqref{eq:unique_1a} from Proposition \ref{prop:quant_unique} implies the approximation result \eqref{eq:approx}. To this end, recall from Section \ref{sec:qual} 
the space $X$ which is the closure of $S_1$ in $L^2(D_1)$, the mapping
\begin{equation}
\label{eq:Asecond}
\begin{split}
A: \widetilde{H}^{1/2}(\Gamma) &\rightarrow X \subset L^2(D_1),\\
g & \mapsto u|_{D_1},
\end{split}
\end{equation}
where $u \in S_2$ has boundary data $g$, and its Banach space adjoint $A': X \subset L^2(D_1) \rightarrow  H^{-1/2}(\Gamma)$, $h \mapsto \pn w|_{\Gamma}$,
where $w$ and $h$ are related through \eqref{eq:dualzero}.

From general functional analysis (see Remark 3.5 in \cite{RS17}), the Hilbert space adjoint $A^*$ of $A$ then becomes $A^{\ast}:=R A'$, where 
\begin{align}
\label{eq:Riesz}
R: H^{-1/2}(\Gamma) \rightarrow \widetilde{H}^{1/2}(\Gamma)
\end{align}
denotes the Riesz isomorphism between a Hilbert space and its dual space. In particular, we have that $\|A^{\ast} h\|_{H^{1/2}(\partial D_2)} = \|A' h\|_{H^{-1/2}(\Gamma)}$.

We seek to argue via a singular value decomposition as in \cite[Section 3]{RS17} or \cite{R95}.
To that end, we begin by studying the operator $A$ from \eqref{eq:Asecond}. 

\begin{lem}
\label{lem:singular_val_decom}
The operator $A$ in \eqref{eq:Asecond} is a compact, injective operator $\widetilde{H}^{1/2}(\Gamma) \to X$. Its range is dense. Further, there are orthonormal bases $\{\varphi_j\}_{j=1}^{\infty}$ of $\widetilde{H}^{1/2}(\Gamma)$ and $\{\psi_j\}_{j=1}^{\infty}$ of $X$, such that $A \varphi_j = \sigma_j \psi_j$ where $\sigma_j > 0$ are the singular values associated with the operator $A$.
\end{lem}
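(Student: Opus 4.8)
The plan is to establish the four claimed properties of $A$ in turn: boundedness/compactness, injectivity, density of range, and the existence of a singular value decomposition. Boundedness of $A: \widetilde{H}^{1/2}(\Gamma) \to L^2(D_1)$ is immediate from Lemma \ref{lemma_solvability_basic}, since for $g \in \widetilde{H}^{1/2}(\Gamma)$ the solution $u$ of $Lu = 0$ in $D_2$, $u|_{\partial D_2} = g$, satisfies $\|u\|_{H^1(D_2)} \leq C \|g\|_{H^{1/2}(\partial D_2)}$, whence $\|Ag\|_{L^2(D_1)} = \|u\|_{L^2(D_1)} \leq \|u\|_{H^1(D_1)} \leq C\|g\|_{H^{1/2}(\partial D_2)}$. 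Compactness follows by factoring $A$ through $H^1(D_1)$: the map $g \mapsto u|_{D_1}$ is bounded into $H^1(D_1)$, and the inclusion $H^1(D_1) \hookrightarrow L^2(D_1)$ is compact by Rellich--Kondrachov (since $D_1$ is bounded Lipschitz). Hence $A$ is compact, and a fortiori $\sigma_j \to 0$.

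Next I would treat injectivity. Suppose $Ag = 0$, i.e.\ the solution $u \in S_2$ with boundary data $g$ vanishes on $D_1$. Since $Lu = 0$ in $D_2$, in particular $Lu = 0$ in the connected open set $D_2 \setminus \overline{D}_1$, and $u$ vanishes on the open subset $D_1$ — more precisely $u = 0$ near $\partial D_1$ from the $D_1$ side, so both the trace and conormal derivative of $u$ vanish on $\partial D_1$. By the (weak) unique continuation principle, e.g.\ \cite[Theorem 1.9]{ARV09} applied in $D_2 \setminus \overline{D}_1$, we get $u \equiv 0$ in $D_2 \setminus \overline{D}_1$, hence $u \equiv 0$ in $D_2$ and therefore $g = u|_{\partial D_2} = 0$. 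Thus $A$ is injective. Density of the range of $A$ in $X$ is precisely the content of the qualitative Runge theorem (Theorem \ref{thm:CR}), whose proof was just recalled — indeed $X$ was defined as the closure of $S_1$ in $L^2(D_1)$, and $\mathrm{ran}(A) = S_2|_{D_1}$ is dense in $S_1$, hence in $X$.

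Finally, the singular value decomposition. Having shown $A$ is compact, injective, with dense range, I consider the self-adjoint compact nonnegative operator $A^*A$ on the Hilbert space $\widetilde{H}^{1/2}(\Gamma)$, where $A^* = RA'$ as described above. By the spectral theorem for compact self-adjoint operators, $\widetilde{H}^{1/2}(\Gamma)$ has an orthonormal basis $\{\varphi_j\}$ of eigenvectors, $A^*A \varphi_j = \lambda_j \varphi_j$ with $\lambda_j \geq 0$ and $\lambda_j \to 0$. Injectivity of $A$ forces $\lambda_j > 0$ for all $j$ (if $\lambda_j = 0$ then $\|A\varphi_j\|^2 = (A^*A\varphi_j,\varphi_j) = 0$), so we set $\sigma_j := \sqrt{\lambda_j} > 0$ and $\psi_j := \sigma_j^{-1} A \varphi_j$. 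Then $A\varphi_j = \sigma_j \psi_j$ by construction, and the $\psi_j$ are orthonormal in $X$ since $(\psi_i,\psi_j)_{L^2(D_1)} = \sigma_i^{-1}\sigma_j^{-1}(A\varphi_i, A\varphi_j) = \sigma_i^{-1}\sigma_j^{-1}(A^*A\varphi_i,\varphi_j) = \sigma_i^{-1}\sigma_j^{-1}\lambda_i (\varphi_i,\varphi_j) = \delta_{ij}$. It remains to check that $\{\psi_j\}$ is a basis (not merely orthonormal) of $X$; this follows because $\overline{\mathrm{span}}\{\psi_j\} = \overline{\mathrm{ran}(A)} = X$ by density of the range — any element of $X$ orthogonal to all $\psi_j$ is orthogonal to $\mathrm{ran}(A)$, hence zero.

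I expect no serious obstacle here; this is a standard packaging of the spectral theorem. The only point requiring a little care is ensuring all $\sigma_j$ are strictly positive and that $\{\psi_j\}$ spans $X$ — both of which rest on the injectivity and density-of-range facts established in the earlier steps, so the logical order above matters.
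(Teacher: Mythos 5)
Your proposal is correct and takes essentially the same approach as the paper: compactness via boundedness into $H^1(D_1)$ plus Rellich, injectivity via unique continuation in the connected set $D_2 \setminus \overline{D}_1$, dense range from the qualitative Runge theorem, and the singular value decomposition from the spectral theorem applied to $A^*A$, with completeness of $\{\psi_j\}$ in $X$ following from dense range. The only cosmetic difference is that the paper verifies directly that the $L^2$-limit of $Ag_j$ solves $Lh=0$ in $D_1$ (hence lies in $X$), whereas you implicitly use that $X$ is a closed subspace of $L^2(D_1)$ containing $\mathrm{ran}(A)$; both are fine.
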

\begin{proof}
For compactness, if $(g_j)$ is a bounded sequence in $\widetilde{H}^{1/2}(\Gamma)$, then $\|A g_j\|_{H^1(D_1)} \leq C$. Thus a subsequence of $(A g_j)$ converges to some $h \in H^1(D_1)$ weakly in $H^1(D_1)$ and strongly in $L^2(D_1)$ by Rellich's theorem. By weak convergence, $h$ solves $L h = 0$ in $D_1$, showing that $h \in X$ and that $A$ is compact. The injectivity of $A$ follows from the unique continuation principle since $D_2\setminus \bar{D}_1$ is connected, and the qualitative Runge approximation (Theorem \ref{thm:CR} and its proof) shows that $A$ has dense range in $X$.

Therefore, the operator $A^{\ast}A:\widetilde{H}^{1/2}(\Gamma) \rightarrow \widetilde{H}^{1/2}(\Gamma)$ is a compact, self-adjoint, positive definite operator. The spectral theorem thus yields the existence of an orthonormal basis $\{\varphi_j\}_{j=1}^{\infty}$ of $\widetilde{H}^{1/2}(\Gamma)$ and a sequence of positive eigenvalues $\{\mu_j\}_{j=1}^{\infty}$ such that
\begin{align*}
A^{*} A \varphi_j = \mu_j \varphi_j.
\end{align*}
We define $\sigma_j = \mu_j^{1/2}$ and set $\psi_j = \sigma_j^{-1} A \varphi_j \in X$. We claim that $\{\psi_j\}_{j=1}^{\infty}$ is an orthonormal basis of $X$. As orthonormality follows by definition, it suffices to prove completeness of this set. This follows again from the qualitative Runge approximation result of Theorem \ref{thm:CR}. Indeed, if $v\in X$ is such that $(v,\psi_j)_{L^2(D_1)}=0$ for all $j$, then by density of $\varphi_j$ in $\widetilde{H}^{1/2}(\Gamma)$ this however entails that $(v,Ag)_{L^2(D_1)}=0$ for all $g\in \widetilde{H}^{1/2}(\Gamma)$. But since $A$ has dense range in $X$, this however implies that $(v, h)_{L^2(D_1)} = 0$ for all $h \in X$. Choosing $h = v$ implies $v = 0$, which concludes the completeness proof.
\end{proof}

With these preliminary results at hand, we proceed to the proof of Theorems \ref{thm:Runge_quant} and \ref{thm:Runge_quant2}.

\begin{proof}[Proof of Theorem \ref{thm:Runge_quant}]
Let $h \in S_1 \subset X$, and write $h = \sum\limits_{j=1}^{\infty} \beta_j \psi_j$. For $\alpha > 0$, we define
\begin{align*}
R_{\alpha} h := \sum\limits_{\sigma_j \geq \alpha} \frac{\beta_j}{\sigma_j} \varphi_j \in \widetilde{H}^{1/2}(\Gamma).
\end{align*} 
By orthonormality, we in particular obtain that
\begin{align}
\label{eq:boundary}
\|R_{\alpha} h \|_{H^{1/2}(\partial D_2)}^2 
= \sum\limits_{\sigma_j \geq \alpha} \frac{\beta_j^2}{\sigma_j^2} 
\leq \frac{1}{\alpha^2}\sum\limits_{\sigma_j \geq \alpha} \beta_j^2
\leq \frac{1}{\alpha^2}\|h\|_{L^2(D_1)}^2.
\end{align}
For the function $h\in S_1$ from above, we set $r_{\alpha}:=\sum\limits_{\sigma_j < \alpha} \beta_j \psi_j$ and define $w_{\alpha}$ as the solution of 
\begin{align*}
L^{\ast} w_{\alpha} = \left\{
\begin{array}{ll}
r_{\alpha} \mbox{ in } D_1,\\
0 \mbox{ in } D_2 \setminus \overline{D}_1,
\end{array}
\right. \ w_{\alpha} = 0 \mbox{ on } \partial D_2.
\end{align*}
By orthogonality and integration by parts using \eqref{green_formula_weak},  
we infer that 
\begin{equation*}
\begin{split}
\|A(R_{\alpha} h)-h\|_{L^2(D_1)}^2 
&= \|r_{\alpha}\|_{L^2(D_1)}^2 = (h,r_{\alpha})_{L^2(D_1)}
= (h, L^{\ast} w_{\alpha})_{L^2(D_1)}\\
& =(L h,  w_{\alpha})_{L^2(D_1)} - (\pn h, w_{\alpha})_{L^2(\partial D_1)} + (h, \pn w_{\alpha})_{L^2(\partial D_1)}.
\end{split}
\end{equation*}
Now $Lh = 0$ in $D_1$ and $L^* w_{\alpha} = 0$ in $D_2 \setminus \overline{D}_1$. Using trace estimates for solutions in the respective domains, combined with the quantitative unique continuation result of Proposition \ref{prop:quant_unique}, this leads to
\begin{equation*}
\begin{split} 
\|A(R_{\alpha} h)-h\|_{L^2(D_1)}^2 
&\leq \| \pn h \|_{H^{-1/2}(\partial D_1)} \| w_{\alpha} \|_{H^{1/2}(\partial D_1)} + \| h \|_{H^{1/2}(\partial D_1)} \| \pn w_{\alpha} \|_{H^{-1/2}(\partial D_1)} \\
&\leq C \| h\|_{H^{1}(D_1)}\|w_{\alpha}\|_{H^{1}(D_2 \setminus \overline{D}_1)}\\
&\leq  \| h\|_{H^{1}( D_1)} 
\frac{C \|r_{\alpha}\|_{L^2(D_1)}}{\log\left( C \frac{\|r_{\alpha}\|_{L^2(D_1)}}{\|\pn w_{\alpha}\|_{H^{-1/2}(\Gamma)}} \right)^{\mu}}.
\end{split}
\end{equation*}
Using the relation between $A^*$ and $A'$, the fact that $A^* \psi_j = \sigma_j \varphi_j$, and orthogonality, we have $\|A^* r_{\alpha}\|_{H^{1/2}(\partial D_2)} \leq \alpha \| r_{\alpha} \|_{L^2(D_1)}$ and 
\begin{align*}
\frac{1 }{\log\left( C \frac{\|r_{\alpha}\|_{L^2(D_1)}}{\|\pn w_{\alpha}\|_{H^{-1/2}(\Gamma)}} \right)}
=
\frac{1 }{-\log\left( C \frac{\|A^{\ast} r_{\alpha}\|_{H^{1/2}(\partial D_2)}}{\|r_{\alpha}\|_{L^2(D_1)}} \right)}
\leq \frac{1}{-\log\left(C \alpha \right)}.
\end{align*}
Defining $\alpha$ so that $\epsilon = C |\log\left( C \alpha \right)|^{-\mu}$ and dividing by $\|A(R_{\alpha} h)-h\|_{L^2(D_1)} = \|r_{\alpha}\|_{L^2(D_1)}$ concludes the proof.
\end{proof}

\begin{proof}[Proof of Theorem \ref{thm:Runge_quant2}]
Let $\tilde{h} \in H^1(\tilde{D})$ satisfy $L\tilde{h} = 0$ in $\tilde{D}$, and define $h := \tilde{h}|_{D_1}$. Then also $h \in S_1$, and we may define $R_{\alpha} h$, $r_{\alpha}$, and $w_{\alpha}$ as in the proof of Theorem \ref{thm:Runge_quant}. Now 
\begin{equation}
\begin{split}
\label{eq:error}
\|A(R_{\alpha} h)-h\|_{L^2(D_1)}^2 
&= \|r_{\alpha}\|_{L^2(D_1)}^2 = (h,r_{\alpha})_{L^2(D_1)}
= (\tilde{h}, L^{\ast} w_{\alpha})_{L^2(\tilde{D})}\\
& = - (\pn \tilde{h}, w_{\alpha})_{L^2(\partial \tilde{D})} + (\tilde{h}, \pn w_{\alpha})_{L^2(\partial \tilde{D})}.
\end{split}
\end{equation}
Define $G = U \setminus \overline{\tilde{D}}$ where $U$ is a Lipschitz domain with $\tilde{D} \Subset U \Subset D_2$ and $\partial G = \partial \tilde{D} \cup \partial U$ ($U$ can be obtained by enlarging $\tilde{D}$ slightly). Using that $L w_{\alpha} = 0$ in $G$, trace estimates together with \eqref{eq:unique_1b} yield 
\begin{equation*}
\begin{split} 
\|A(R_{\alpha} h)-h\|_{L^2(D_1)}^2 
&\leq \| \pn \tilde{h} \|_{H^{-1/2}(\partial \tilde{D})} \| w_{\alpha} \|_{H^{1/2}(\partial \tilde{D})} + \| \tilde{h} \|_{H^{1/2}(\partial \tilde{D})} \| \pn w_{\alpha} \|_{H^{-1/2}(\partial \tilde{D})} \\
&\leq C \| \tilde{h} \|_{H^{1}(\tilde{D})}\|w_{\alpha}\|_{H^{1}(G)}\\
&\leq  C \| \tilde{h} \|_{H^{1}(\tilde{D})} 
\|r_{\alpha}\|_{L^2(D_1)} \left( \frac{\|\pn w_{\alpha}\|_{H^{-1/2}(\Gamma)}}{\|r_{\alpha}\|_{L^2(D_1)}} \right)^{\delta}.
\end{split}
\end{equation*}
Repeating the argument in the proof of Theorem \ref{thm:Runge_quant} gives $\|A(R_{\alpha} h)-h\|_{L^2(D_1)} \leq C \| \tilde{h} \|_{H^{1}(\tilde{D}_1)} \alpha^{\delta}$. Since also $\|R_{\alpha} h \|_{H^{1/2}(\partial D_2)} \leq \alpha^{-1} \| h \|_{L^2(D_1)}$, choosing $\alpha = (\epsilon/C)^{1/\delta}$ finishes the proof.
\end{proof}

\section{Optimality}
\label{sec:optimal}

In order to infer optimality of the result of Theorem \ref{thm:Runge_quant} we consider the simplest possible case of harmonic functions. For these we easily obtain optimality of the bounds in \eqref{eq:approx}. The idea is to consider boundary values given by spherical harmonics on $\partial B_1$, so that the corresponding harmonic functions will decay rapidly toward the interior.

\begin{prop}
Let $L=\Delta$ and set $D_1=B_{1/2}$, $D_2=B_1$, and $\Gamma = \partial B_1$. There exists a sequence $(h_j) \subset S_1$ with $\|h_j\|_{H^1(D_1)}=1$ such that 
\begin{align*}
\mbox{for any $u \in S_2$ with }\|h_j - u\|_{L^2(D_1)}\leq (10 j)^{-1}, \mbox{ one has }
\|u\|_{H^{1/2}(\partial D_2)} \geq c e^{c j}.
\end{align*}
\end{prop}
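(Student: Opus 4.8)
The plan is to exploit the explicit separation of variables for harmonic functions on concentric balls. In $\mathbb{R}^n$, let $Y_j$ be a spherical harmonic of degree $j$ on $\partial B_1$, normalized in $L^2(\partial B_1)$. The harmonic function in $B_1$ with boundary data $Y_j$ is the solid harmonic $H_j(x) = |x|^j Y_j(x/|x|)$, and the harmonic function in $B_{1/2}$ with the same ``angular profile'' is again $r^j Y_j$. First I would set $h_j := c_j |x|^j Y_j(x/|x|)|_{B_{1/2}}$ with $c_j > 0$ chosen so that $\|h_j\|_{H^1(B_{1/2})} = 1$; a direct computation (integrating $r^{j}$ and $j r^{j-1}$ against $r^{n-1}\,dr$ on $(0,1/2)$) shows $\|h_j\|_{L^2(B_{1/2})}^2 \asymp c_j^2 2^{-2j}/j$ and $\|h_j\|_{H^1(B_{1/2})}^2 \asymp c_j^2 j 2^{-2j}$, so $c_j \asymp j^{-1/2} 2^{j}$, and hence $\|h_j\|_{L^2(B_{1/2})} \asymp j^{-1} \asymp$ (a constant times) $(10j)^{-1}$ up to constants I can absorb.

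Next, the key structural point: solutions of $\Delta u = 0$ in $B_1$ with boundary data in $L^2(\partial B_1)$ decompose orthogonally (in $L^2(B_\rho)$ for every $\rho$) into solid harmonics $\sum_k a_{k,m} |x|^k Y_{k,m}(x/|x|)$. So if $u \in S_2$, write its boundary data as $g = \sum_{k,m} a_{k,m} Y_{k,m}$; then the projection of $u|_{B_{1/2}}$ onto the degree-$j$ sector is exactly $a_{j}\, |x|^j Y_j(x/|x|)$ (writing $a_j$ for the relevant coefficient). If $\|h_j - u\|_{L^2(B_{1/2})} \le (10j)^{-1}$, then in particular the degree-$j$ component of $u - h_j$ is small in $L^2(B_{1/2})$, which forces $|c_j - a_j| \cdot (\text{norm of } r^j Y_j \text{ in } L^2(B_{1/2})) \le (10j)^{-1}$. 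Since $\|r^j Y_j\|_{L^2(B_{1/2})} \asymp j^{-1/2} 2^{-j}$ and $c_j \cdot j^{-1/2}2^{-j} \asymp j^{-1}$, the error $(10j)^{-1}$ is a fixed small fraction of $c_j \cdot j^{-1/2} 2^{-j}$; choosing the normalization constants so that the threshold is, say, at most half of this quantity, we conclude $|a_j| \ge \tfrac12 c_j \asymp j^{-1/2} 2^j$. Finally, $\|u\|_{H^{1/2}(\partial B_1)} \ge \|g\|_{L^2(\partial B_1)}/C \ge |a_j|/C \ge c\, j^{-1/2} 2^j \ge c' e^{c'' j}$, which is the claimed bound (adjusting constants, and noting $2^j = e^{j\log 2}$).

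The main obstacle — really a bookkeeping issue rather than a conceptual one — is making the constants line up so that the prescribed threshold $(10j)^{-1}$ is genuinely dominated by the $L^2(B_{1/2})$-size of the degree-$j$ solid harmonic carrying $h_j$; this is why the statement uses the specific factor $10$, and I would track the dimensional constants $\omega_n = |\partial B_1|$ through the radial integrals carefully, possibly replacing $H^1$-normalization by the equivalent statement and absorbing $n$-dependent constants into the final $c$. A secondary point to state cleanly is the orthogonality of distinct-degree solid harmonics in $L^2(B_\rho)$ (immediate from orthogonality of spherical harmonics on each sphere $\partial B_r$ and Fubini in polar coordinates), and the fact that the trace map $H^{1/2}(\partial B_1) \to L^2(\partial B_1)$ is bounded below — both standard. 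Everything else is the explicit computation above.
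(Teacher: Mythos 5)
Your proposal is correct and follows essentially the same route as the paper: take $h_j$ to be the $H^1(B_{1/2})$-normalized degree-$j$ solid spherical harmonic, expand any candidate $u\in S_2$ into solid harmonics, use the $L^2(B_{1/2})$-orthogonality of distinct degrees to force the degree-$j$ boundary coefficient of $u$ to be comparable to $c_j\asymp j^{-1/2}2^j$, and conclude exponential growth of $\|u\|_{H^{1/2}(\partial B_1)}$. Two small points deserve care. First, you cannot ``choose the normalization constants'' --- both $\|h_j\|_{H^1(D_1)}=1$ and the threshold $(10j)^{-1}$ are prescribed, so what is actually required (and what the constant $10$ is there for) is to verify that $\|h_j\|_{L^2(D_1)}$ genuinely exceeds $(10j)^{-1}$ by a uniform margin; the paper's explicit formulas give $\|h_j\|_{L^2(D_1)}=\bigl((2j+n)(4j+(2j+n)^{-1})\bigr)^{-1/2}\approx (2\sqrt 2\, j)^{-1}$, which is larger than $(10j)^{-1}$ by a factor $\approx 3.5$, so the argument closes. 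Second, the phrase ``the trace map $H^{1/2}(\partial B_1)\to L^2(\partial B_1)$ is bounded below'' states the wrong inequality; what you actually use is continuity of the embedding, $\|g\|_{L^2(\partial B_1)}\le C\|g\|_{H^{1/2}(\partial B_1)}$. (The paper instead exploits the $H^{1/2}$ weight $(1+\lambda_j^{1/2})^{1/2}\sim j^{1/2}$ on the degree-$j$ component to get the slightly sharper bound $\gtrsim 2^j$ rather than your $\gtrsim j^{-1/2}2^j$, but both are $\ge ce^{cj}$.)
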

\begin{proof}
Let $H_l$ be the subspace of $L^2(\partial B_1)$ consisting of spherical harmonics of degree $l$, associated with the eigenvalue $\lambda_l = l(l+n-2)$ of the spherical Laplacian. Working in polar coordinates $x=r \theta$ where $\theta \in \partial B_1$, any $u \in H^1(B_1)$ solving $\Delta u = 0$ in $B_1$ may be written as 
\[
u(r\theta) = \sum_{l=0}^{\infty} r^l u_l(\theta), \qquad u_l(\theta) = \sum_{m=1}^{N_l} c_{lm} \psi_{lm}(\theta) \in H_l
\]
where $\{ \psi_{l1}, \ldots, \psi_{lN_l} \}$ is an orthonormal basis of $H_l$. If $\tilde{u} = \sum r^l \tilde{u}_l(\theta)$ is another such function, then by orthogonality 
\begin{equation} \label{solid_spherical_harmonic_computation}
(u, \tilde{u})_{L^2(B_R)} = \sum_{l=0}^{\infty} \int_0^R r^{2l+n-1} (u_l, \tilde{u}_l)_{L^2(\partial B_1)} \,dr = \sum_{l=0}^{\infty} \frac{R^{2l+n}}{2l+n} (u_l, \tilde{u}_l)_{L^2(\partial B_1)}.
\end{equation}

We define $h_l$ in $B_1$ by 
\[
h_l(r\theta) := \alpha_l g_l(r\theta), \qquad g_l(r\theta) := r^l \psi_{l1}(\theta),
\]
where $\alpha_l := \| g_l \|_{H^1(D_1)}^{-1}$. Then $h_l$ is harmonic and $h_l|_{D_1} \in S_1$. By \eqref{solid_spherical_harmonic_computation}, 
\[
\| g_l \|_{L^2(D_1)}^2 = (2l+n)^{-1} 2^{-2l-n}
\]
and using integration by parts  
\[
\| \nabla g_l \|_{L^2(D_1)}^2 = ( \p_r g_l, g_l)_{L^2(\partial B_{1/2})} = l 2^{2-2l-n}.
\]
We obtain  
\begin{equation} \label{gl_norm_computations}
\| g_l \|_{L^2(D_1)} = (2l+n)^{-1/2} 2^{-l-n/2}, \ \  \| g_l \|_{H^1(D_1)} = (4l + (2l+n)^{-1})^{1/2} 2^{-l-n/2}.
\end{equation}

Let now $u \in S_2$. Choosing $c$ so that $c \alpha_l = \| g_l \|_{L^2(D_1)}^{-2} (u, g_l)_{L^2(D_1)}$, we may write 
\[
u = c \alpha_l g_l + v \text{ in } B_1, \qquad (v, g_l)_{L^2(D_1)} = 0.
\]
If additionally $\|u-h_l\|_{L^2(D_1)} \leq (10 l)^{-1}$, it follows that $|c-1| \alpha_l \| g_l \|_{L^2(D_1)} \leq (10 l)^{-1}$. Since $v$ also solves $\Delta v = 0$ in $B_1$, we may write $v = \sum r^k v_k(\theta)$ where $v_k \in H_k$, and the condition $(v, h_l)_{L^2(D_1)} = 0$ implies by \eqref{solid_spherical_harmonic_computation} that $v_l = \sum_{m=2}^{N_l} d_{lm} \psi_{lm}(\theta)$ (i.e.\ the coefficient for $\psi_{l1}$ is zero). Combining these facts shows that $u|_{\partial B_1}$ satisfies 
\[
u(\theta) = c \alpha_l \psi_{l1}(\theta) + w(\theta), \qquad (w, \psi_{l1})_{L^2(\partial B_1)} = 0.
\]
Therefore, since $|c \alpha_l - \alpha_l| \leq (10 l)^{-1} \| g_l \|_{L^2(D_1)}^{-1}$, the formulas \eqref{gl_norm_computations} yield that for $l$ large  
\[
\|u\|_{H^{1/2}(\partial B_1)} \geq (1 + \lambda_l^{1/2})^{1/2} |c \alpha_l| \geq l^{1/2} (\alpha_l - (10 l)^{-1} \| g_l \|_{L^2(D_1)}^{-1}) \geq c_n 2^{l/2}
\]
which implies the claimed lower bound. 
\end{proof}

\section[Application to Stability]{Application to the Calder\'on Problem with Local Data}
\label{sec:stability}

As an application of the quantified version of the Runge approximation, we demonstrate its applicability in inverse problems by providing a new stability proof for the partial data problem for the Schr\"odinger equation assuming that the potentials agree near the boundary (see \cite{AU04}). 
Although the result itself is not new (it had first been derived in \cite{F07} for the Schr\"odinger and in \cite{AK12} for the conductivity equation), we believe that the ideas which are used in our proof differ from the ones in \cite{F07}, \cite{AK12} (although not surprisingly quantitative unique continuation and propagation of smallness play a central role in all these results) and could be useful in other inverse problems. 

In order to state our main result, let $\Omega \subset \R^n$ be a bounded Lipschitz domain, let $q \in L^{\infty}(\Omega)$, and assume that $0$ is not a Dirichlet eigenvalue of $-\Delta+q$ in $\Omega$. Let $\Gamma$ be a nonempty open subset of $\partial \Omega$. We consider the associated \emph{local Dirichlet-to-Neumann map}
\begin{align*}
\Lambda_{q}^{\Gamma}: \widetilde{H}^{1/2}(\Gamma) \rightarrow H^{-1/2}(\Gamma), \ g \mapsto \partial_{\nu} u|_{\Gamma},
\end{align*}
where $u \in H^1(\Omega)$ is the solution of the problem 
\begin{align*}
(-\D+q)u & = 0 \mbox{ in } \Omega,\\
u & = g \mbox{ on } \partial \Omega.
\end{align*}
We also write 
\[
\| \Lambda_{q}^{\Gamma} \|_* = \| \Lambda_{q}^{\Gamma} \|_{\widetilde{H}^{1/2}(\Gamma) \rightarrow H^{-1/2}(\Gamma)}.
\]

With this notation at hand, we will prove the following stability estimate. The proof is based on quantifying the argument of \cite{AU04} by using Theorem \ref{thm:Runge_quant2}.

\begin{prop}[Stability]
\label{prop:stability}
Let $n \geq 3$, let $\Omega \subset \R^n$ be bounded a Lipschitz domain, and suppose that
$q_1,q_2 \in L^{\infty}(\Omega)$ are such that zero is not a Dirichlet eigenvalue of $-\D + q_j$ in $\Omega$ and 
\begin{align*}
\|q_j\|_{L^{\infty}(\Omega)} \leq M <\infty \mbox{ for } j =1,2.
\end{align*}
Assume further that $q_1=q_2$ in $\Omega \setminus \overline{\Omega}'$ where $\Omega' \Subset \Omega$ is another bounded Lipschitz domain such that $\Omega \setminus \overline{\Omega}'$ is connected. Then 
\begin{align*}
\|q_1-q_2\|_{H^{-1}(\Omega)} \leq \omega(\|\Lambda_{q_1}^{\Gamma}-\Lambda_{q_2}^{\Gamma}\|_{\ast})
\end{align*}
where $\omega(t)= C|\log(t)|^{-\sigma}$ for $t \in [0,1]$, and $C > 1, \sigma > 0$ only depend on $\Omega, \Omega', \Gamma, n, M$.
\end{prop}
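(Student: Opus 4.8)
The plan is to quantify the Alessandrini--Uhlmann argument \cite{AU04}, combining complex geometrical optics (CGO) solutions in a ball with the quantitative Runge approximation of Theorem \ref{thm:Runge_quant2} and a frequency splitting. I would first fix a Lipschitz domain $\Omega_0$ with $\Omega' \Subset \Omega_0 \Subset \Omega$ (e.g.\ a finite union of small balls covering $\overline{\Omega'}$) and extend $q_1,q_2$ by zero to a fixed ball $B \Supset \Omega$, so that $q_1 = q_2$ on $B \setminus \overline{\Omega'}$ and $\|q_j\|_{L^\infty(B)} \le M$. The starting point is the integral identity: for $u_j \in H^1(\Omega)$ solving $(-\Delta+q_j)u_j = 0$ in $\Omega$ with $u_j|_{\partial\Omega} = g_j \in \widetilde{H}^{1/2}(\Gamma)$, Green's formula \eqref{green_formula_weak} together with the symmetry of the bilinear form $\langle \Lambda_q^\Gamma \cdot,\cdot\rangle$ gives
\[
\int_{\Omega}(q_1-q_2)u_1 u_2\,dx = \langle (\Lambda_{q_1}^{\Gamma}-\Lambda_{q_2}^{\Gamma})g_1, g_2\rangle,
\]
and the right-hand side is bounded by $\delta\,\|g_1\|_{\widetilde{H}^{1/2}(\Gamma)}\|g_2\|_{\widetilde{H}^{1/2}(\Gamma)}$, where $\delta := \|\Lambda_{q_1}^{\Gamma}-\Lambda_{q_2}^{\Gamma}\|_{\ast}$. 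Since $q_1-q_2$ is supported in $\overline{\Omega'}$, the integral is effectively over $\Omega'$.

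Next I would bring in CGO solutions. For $\xi\in\R^n$ and $\tau \ge 2|\xi|+1$, the classical construction (Sylvester--Uhlmann / H\"ahner, valid for $L^\infty$ potentials precisely because $n\ge 3$) yields solutions $v_j = e^{\rho_j\cdot x}(1+\psi_j)$ of $(-\Delta+q_j)v_j=0$ in $B$, with $\rho_j\in\C^n$, $\rho_j\cdot\rho_j = 0$, $\rho_1+\rho_2 = i\xi$, $|\Re\rho_j|\le C_n\tau$, and $\|\psi_j\|_{L^2(B)}\le C\tau^{-1}$, $\|\nabla\psi_j\|_{L^2(B)}\le C$, with $C = C(n,M,B)$. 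Hence $\|v_j\|_{H^1(\Omega_0)}\le C e^{C_1\tau}$ with $C_1 = C_1(n,\Omega,M)$ uniform in $\xi$ for $|\xi|\le\tau$. Applying Theorem \ref{thm:Runge_quant2} with $L = \Delta - q_j$, $D_1 = \Omega'$, $D_2 = \Omega$, $\tilde D = \Omega_0$ (to the real and imaginary parts of $v_j|_{\Omega_0}$ separately and recombining), for each $\epsilon\in(0,1)$ I obtain $u_j\in S_2$ with $g_j := u_j|_{\partial\Omega}\in\widetilde{H}^{1/2}(\Gamma)$ such that, setting $r_j := (u_j-v_j)|_{\Omega'}$,
\[
\|r_j\|_{L^2(\Omega')} \le C\epsilon e^{C_1\tau}, \qquad \|g_j\|_{\widetilde{H}^{1/2}(\Gamma)} \le C\epsilon^{-\mu}e^{C_1\tau}.
\]
Inserting $u_j = v_j + r_j$ into the integral identity, expanding, and using $v_1 v_2 = e^{i\xi\cdot x}(1+\psi_1+\psi_2+\psi_1\psi_2)$, the left side produces $\widehat{q_1-q_2}(\xi)$ (Fourier transform of the zero extension) up to a remainder bounded by $CM\tau^{-1}$ from the $\psi_j$ terms and by $CM\epsilon e^{2C_1\tau}$ from the $v_j r_k$ and $r_1 r_2$ terms, while the right side is $\le C\delta\,\epsilon^{-2\mu}e^{2C_1\tau}$; altogether
\[
|\widehat{q_1-q_2}(\xi)| \le \frac{CM}{\tau} + CM\epsilon e^{2C_1\tau} + C\delta\,\epsilon^{-2\mu}e^{2C_1\tau}.
\]

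It remains to choose parameters. Balancing the last two terms with $\epsilon = c\,\delta^{1/(1+2\mu)}$ reduces the bound to $\tfrac{CM}{\tau} + C\delta^{\theta}e^{2C_1\tau}$ with $\theta := (1+2\mu)^{-1}\in(0,1)$; then taking $\tau = \tfrac{\theta}{4C_1}|\log\delta|$ gives $|\widehat{q_1-q_2}(\xi)| \le C|\log\delta|^{-1}$ for all $|\xi|\le\tau$ and all $\delta$ small. Finally I would split $\|q_1-q_2\|_{H^{-1}(\Omega)}^2 \le \|\widetilde{q_1-q_2}\|_{H^{-1}(\R^n)}^2$ as $\int_{|\xi|\le\rho} + \int_{|\xi|>\rho}$ with $\rho = |\log\delta|^{\kappa}$ for a fixed $\kappa\in\big(0,\min(1,\tfrac{2}{n-2})\big)$ (so $\rho\le\tau$ for $\delta$ small): the low-frequency part is $\lesssim |\log\delta|^{-2}\rho^{n-2} = |\log\delta|^{\kappa(n-2)-2}$, and the high-frequency part is $\lesssim \rho^{-2}\|q_1-q_2\|_{L^2}^2 \lesssim M^2|\log\delta|^{-2\kappa}$, using $\|q_1-q_2\|_{L^\infty}\le 2M$. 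Both exponents are negative, yielding $\|q_1-q_2\|_{H^{-1}(\Omega)} \le C|\log\delta|^{-\sigma}$ with $\sigma = \tfrac12\min(2-\kappa(n-2),2\kappa)>0$; for $\delta$ not small the estimate is trivial from the a priori bound $\|q_1-q_2\|_{H^{-1}(\Omega)} \le 2M|\Omega|^{1/2}$.

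I expect the main obstacle to be the bookkeeping of the three competing error terms rather than any single technical ingredient: the CGO construction is classical and Theorem \ref{thm:Runge_quant2} does the essential work, but the CGO remainder contributes an error of size $\tau^{-1}$ while the Runge-approximation and data errors grow like $e^{C\tau}$, which forces $\tau\sim|\log\delta|$ and hence the logarithmic modulus; one then has to be careful that the constant $C_1$ in $e^{C_1\tau}$ does not depend on $\xi$ (true as long as $\tau\ge 2|\xi|$), and that the frequency cut-off exponent $\kappa$ is chosen dimension-dependently so that the passage from a pointwise Fourier bound to the $H^{-1}$ norm succeeds for every $n\ge 3$.
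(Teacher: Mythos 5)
Your proof is correct and follows essentially the same route as the paper: Alessandrini--Uhlmann integral identity, CGO solutions, Theorem~\ref{thm:Runge_quant2} to replace the CGOs by solutions with boundary data supported in $\Gamma$, and a balance of the $\tau^{-1}$, $\epsilon e^{C\tau}$, and $\delta\epsilon^{-2\mu}e^{C\tau}$ errors followed by a Fourier frequency split. The only differences are cosmetic (CGOs constructed on a ball $B\supset\Omega$ rather than on $\Omega$ directly, explicit treatment of real and imaginary parts before applying the real-linear Runge theorem, and $\rho=|\log\delta|^{\kappa}$ instead of the paper's optimized $\rho=\tau^{2/n}$), none of which changes the argument in substance.
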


\begin{proof} 
We will construct complex geometrical optics solutions $u_j$ solving $(-\Delta+q_j) u_j = 0$ in $\Omega$ (see \cite{SU87}). Fix $k \in \R^n$, choose unit vectors $l, m \in \R^n$ with $k \cdot l = k \cdot m = l \cdot m = 0$ (here we use the assumption $n \geq 3$), and for $\tau \geq |k|/2$ define the complex vectors 
\begin{align*}
\rho_1 = \tau m + i \left( -\frac{k}{2} + \sqrt{\tau^2 - \frac{|k|^2}{4}} \,l \right), \quad \rho_2 = -\tau m + i \left( -\frac{k}{2} - \sqrt{\tau^2 - \frac{|k|^2}{4}} \,l \right).
\end{align*}
By \cite{SU87}, if $\tau \geq 1$ is large enough, there exist $u_j \in H^1(\Omega)$ solving $(-\Delta+q_j) u_j = 0$ in $\Omega$ and having the form 
\begin{align*}
u_j = e^{x \cdot \rho_j}(1+ \psi_{i}(x,\rho_j)), \quad j\in\{1,2\},
\end{align*}
where 
\begin{align*}
\|\psi_j\|_{L^2(\Omega)} \leq \frac{C}{\tau} \|q_j\|_{L^2(\Omega)}, \quad \|\psi_j\|_{H^1(\Omega)} \leq C \|q_j\|_{L^2(\Omega)}.
\end{align*}

Given an error threshold $\epsilon>0$, whose precise value will be fixed later, we use Theorem \ref{thm:Runge_quant2} (with $D_1 = \Omega'$, $\tilde{D}$ being a slight fattening of $\Omega'$ and $D_2 = \Omega$) to find $\tilde{u}_j \in H^1(\Omega)$, $j\in\{1,2\}$, solving $(-\Delta+q_j) \tilde{u}_j = 0$ in $\Omega$ with $\tilde{u}_j|_{\partial \Omega \setminus \overline{\Gamma}} = 0$, such that one has 
\[
\| \tilde{u}_j - u_j \|_{L^2(\Omega')} \leq \epsilon \| u_j \|_{H^1(\tilde{D})}, \qquad \| \tilde{u}_j \|_{H^{1/2}(\partial \Omega)} \leq C \epsilon^{-\mu} \| u_j \|_{L^2(\Omega')}.
\]
Since $q_1 = q_2$ in $\Omega \setminus \overline{\Omega}'$, integration by parts using \eqref{green_formula_weak} gives that 
\begin{align*}
\int\limits_{\Omega'}(q_1-q_2)\tilde{u}_1 \tilde{u}_2 \,dx &= \int\limits_{\Omega}(q_1-q_2)\tilde{u}_1 \tilde{u}_2 \,dx = (\Delta \tilde{u}_1, \tilde{u}_2)_{L^2(\Omega)} - (\tilde{u}_1, \Delta \tilde{u}_2)_{L^2(\Omega)} \\
&= (\pn \tilde{u}_1, \tilde{u}_2)_{L^2(\p \Omega)} - (\tilde{u}_1, \pn \tilde{u}_2)_{L^2(\p \Omega)} = (\Lambda_{q_1}^{\Gamma} \tilde{u}_1, \tilde{u}_2)_{L^2(\p \Omega)} - (\tilde{u}_1, \Lambda_{q_2}^{\Gamma} \tilde{u}_2)_{L^2(\p \Omega)} \\
 &= ( (\Lambda_{q_1}^{\Gamma} - \Lambda_{q_2}^{\Gamma}) \tilde{u}_1, \tilde{u}_2 )_{L^2(\partial \Omega)}.
\end{align*}
Here we used that $(\Lambda_q^{\Gamma} g, h)_{L^2(\partial \Omega)} = (g, \Lambda_q^{\Gamma} h)_{L^2(\partial \Omega)}$ (this follows from \eqref{green_formula_weak}).
Hence,
\begin{align*}
\int\limits_{\Omega'}(q_1-q_2) e^{-i k \cdot x} \,dx
& = -\int\limits_{\Omega'}(q_1-q_2)e^{-ik \cdot x}(\psi_1+\psi_2 + \psi_1 \psi_2) \,dx
+ \int\limits_{\Omega'}(q_1-q_2)(u_1-\tilde{u}_1) u_2 \,dx \\
& \quad + \int\limits_{\Omega'}(q_1-q_2)(u_2-\tilde{u}_2) \tilde{u}_1 \,dx
+ ( (\Lambda_{q_1}^{\Gamma} - \Lambda_{q_2}^{\Gamma}) \tilde{u}_1, \tilde{u}_2 )_{L^2(\partial \Omega)}. 
\end{align*}
As a consequence, if $q_j$ are extended by zero to $\R^n$, 
\begin{align*}
|\mathcal{F}(q_1-q_2)(k)| 
&\leq C(\Omega)M ( \|\psi_1\|_{L^2(\Omega)}+ \|\psi_2\|_{L^2(\Omega)} + \|\psi_1\|_{L^2(\Omega)}\,\|\psi_2\|_{L^2(\Omega)}) \\
& \quad + 2M \|u_2\|_{L^2(\Omega)}\|u_1-\tilde{u}_1\|_{L^2(\Omega')}
+ 2M \|\tilde{u}_1\|_{L^2(\Omega')}\|u_2-\tilde{u}_2\|_{L^2(\Omega')}\\
& \quad + \|\Lambda_{q_1}^{\Gamma}-\Lambda_{q_2}^{\Gamma}\|_{*}\|\tilde{u}_1\|_{H^{1/2}(\partial \Omega)} \|\tilde{u}_2\|_{H^{1/2}(\partial \Omega)}\\
&\leq \frac{C}{\tau}  + C \epsilon (\|u_2\|_{L^2(\Omega)}\|u_1\|_{H^1(\Omega)} + (\|u_1\|_{L^2(\Omega)} + \epsilon \|u_1\|_{H^1(\Omega)}) \|u_2\|_{H^1(\Omega)} ) \\
& \quad + \|\Lambda_{q_1}^{\Gamma}-\Lambda_{q_2}^{\Gamma}\|_{*} C  \epsilon^{-2\mu}\|u_1\|_{L^{2}( \Omega)} \|u_2\|_{L^{2}(\Omega)}\\
&\leq \frac{C}{\tau}  + C \epsilon \tau^2 e^{C\tau} + C  \epsilon^{-2\mu} e^{C\tau} \|\Lambda_{q_1}^{\Gamma}-\Lambda_{q_2}^{\Gamma}\|_{*}
\end{align*}
where $C$ and $\mu$ depend on $\Omega, \Omega', \Gamma, n, M$. Therefore, for any $\rho \geq 1$, 
\begin{align*}
\|q_1-q_2\|_{H^{-1}(\Omega)}^2
& \leq \int\limits_{|k|<\rho}|\mathcal{F}(q_1-q_2)(k)|^2(1+|k|^2)^{-1} \,dk +  \int\limits_{|k|\geq \rho}|\mathcal{F}(q_1-q_2)(k)|^2(1+\rho^2)^{-1} \,dk\\
& \leq C \rho^{n-2} \left( \frac{1}{\tau^2}  + \epsilon^2 e^{C \tau} +\|\Lambda_{q_1}^{\Gamma}-\Lambda_{q_2}^{\Gamma}\|_{*}^2  \epsilon^{-4\mu} e^{C \tau} \right) + \frac{1}{1+\rho^2}\|q_1-q_2\|_{L^2(\Omega)}^2\\
& \leq C \rho^{n-2} \left( \frac{1}{\tau^2}  + \epsilon^2 e^{C \tau} +\|\Lambda_{q_1}^{\Gamma}-\Lambda_{q_2}^{\Gamma}\|_{*}^2  \epsilon^{-4\mu} e^{C \tau} \right) + \frac{C}{\rho^2}.
\end{align*}
Choosing $\rho = \tau^{\frac{2}{n}}$, $\epsilon = \|\Lambda_{q_1}^{\Gamma}-\Lambda_{q_2}^{\Gamma}\|_{*}^{\frac{2}{2+4\mu}}$ and $\tau = C^{-1} |\log(\|\Lambda_{q_1}^{\Gamma}-\Lambda_{q_2}^{\Gamma}\|_{*}^{\frac{2}{2+4\mu}})|$ then implies the desired result.
\end{proof}

\begin{rmk}
Alternatively, one could use Theorem \ref{thm:Runge_quant2} to give an easy proof of an analogue of \cite[Theorem 3.1]{AK12}: if $\tilde{D}$ is a suitable Lipschitz domain with $\Omega' \Subset \tilde{D} \Subset \Omega$ in the setup of Proposition \ref{prop:stability}, and if $\Lambda_{q}^{\partial \tilde{D}}$ is the Dirichlet-to-Neumann map on the full boundary of $\tilde{D}$, then 
\[
\| \Lambda_{q_1}^{\partial \tilde{D}} - \Lambda_{q_2}^{\partial \tilde{D}} \|_{H^{1/2}(\partial \tilde{D}) \to H^{-1/2}(\partial \tilde{D})} \leq C \|\Lambda_{q_1}^{\Gamma}-\Lambda_{q_2}^{\Gamma}\|_{\ast}^{\beta}
\]
for some $\beta \in (0,1)$. Combining this with \cite{A88} would imply Proposition \ref{prop:stability}.
\end{rmk}

\bibliographystyle{alpha}
\bibliography{citations_Runge}

\end{document}